\theoremstyle{plain}
\newtheorem*{thmA}{Theorem A}
\newtheorem*{thmB}{Theorem B}
\newtheorem{thm}{Theorem}[section]
\newtheorem{lem}[thm]{Lemma}
\newtheorem{pro}[thm]{Proposition}
\theoremstyle{definition}
\newtheorem{exa}[thm]{Example}
\newtheorem{rmk}[thm]{Remark}
\newcommand{\Z}{\mathbb{Z}}
\newcommand{\N}{\mathbb{N}}
\newcommand{\UU}{\mathcal{U}}
\newcommand{\FF}{\mathcal{F}}
\DeclareMathOperator{\Aut}{Aut}
\DeclareMathOperator{\imm}{im}
\DeclareMathOperator{\mni}{mni}
\DeclareMathOperator{\mci}{mci}
\DeclareMathOperator{\Cl}{Cl}
\begin{document}

\title[Normalizer or centralizer restrictions]
{Some restrictions on normalizers or centralizers in finite $p$-groups}
\author[G.A. Fern\'andez-Alcober]{Gustavo A. Fern\'andez-Alcober}
\address{Matematika Saila\\ Euskal Herriko Unibertsitatea UPV/EHU\\
48080 Bilbao\\ Spain}
\email{gustavo.fernandez@ehu.es}
\author[L. Legarreta]{Leire Legarreta}
\address{Matematika Saila\\ Euskal Herriko Unibertsitatea UPV/EHU\\
48080 Bilbao\\ Spain}
\email{leire.legarreta@ehu.es}
\author[A. Tortora]{Antonio Tortora}
\address{Dipartimento di Matematica\\ Universit\`a di Salerno\\
Via Giovanni Paolo II, 132\\ 84084 Fisciano (SA)\\ Italy}
\email{antortora@unisa.it}
\author[M. Tota]{Maria Tota}
\address{Dipartimento di Matematica\\ Universit\`a di Salerno\\
Via Giovanni Paolo II, 132\\ 84084 Fisciano (SA)\\ Italy}
\email{mtota@unisa.it}

\thanks{The first two authors are supported by the Spanish Government, grant
MTM2011-28229-C02-02, and by the Basque Government, grants IT460-10 and IT753-13.
The last two authors would like to thank the Department of Mathematics at the University of the Basque Country for its excellent hospitality while part of this paper was being written. They also wish to thank G.N.S.A.G.A. (INdAM) for financial support.}

\begin{abstract}
We study three restrictions on normalizers or centralizers in finite $p$-groups, namely:
(i) $|N_G(H):H|\le p^k$ for every $H\not\trianglelefteq G$,
(ii) $|N_G(\langle g \rangle):\langle g \rangle|\le p^k$ for every
$\langle g \rangle \not\trianglelefteq G$, and
(iii) $|C_G(g):\langle g \rangle|\le p^k$ for every
$\langle g \rangle \not\trianglelefteq G$.
We prove that (i) and (ii) are equivalent, and that the order of a non-Dedekind finite
$p$-group satisfying any of these three conditions is bounded for $p>2$.
More precisely, we get the best possible bound for the order of $G$ in all three cases, which is $|G|\le p^{2k+2}$.
The order of the group cannot be bounded for $p=2$, but we are able to identify two infinite families of $2$-groups out of which $|G|\le 2^{f(k)}$ for some function $f(k)$ depending only on $k$.
\end{abstract}

\maketitle

\section{Introduction}

The analysis of groups which satisfy some restriction related to normality
is a common topic in group theory.
Classical examples are the determination by Dedekind \cite{ded} and Baer \cite{bae}
of the groups with all subgroups normal (now known as \emph{Dedekind groups\/}), the characterisation by Neumann \cite{neu} of the groups $G$ with $|G:N_G(H)|<\infty$ for every subgroup $H$ as the central-by-finite groups, or the characterisation in the same paper of the groups with $|H^G:H|<\infty$ for every subgroup $H$ as the groups with
finite derived subgroup.
Numerous papers have been devoted to other types of normality conditions, and this is an active area of research nowadays.

In the recent papers \cite{zha-gao} and \cite{zha-guo}, a new condition has been considered in connection to normality, in the realm of nilpotent groups.
If $G$ is nilpotent and $H$ is a proper subgroup of $G$, we know that $|N_G(H):H|>1$.
The normalizer $N_G(H)$ will be as large as the whole of $G$ if $H$ is normal in $G$, but
what happens if we impose a bound to the index $|N_G(H):H|$ for
\emph{every non-normal subgroup\/} $H$?
Dedekind groups satisfy this type of condition vacuously; what can be said about non-Dedekind groups?
For \emph{finite\/} nilpotent groups, this problem reduces to finite $p$-groups, for $p$ a prime.
Thus the following question arises: if $k$ is a fixed positive integer, what can be said about the finite $p$-groups $G$ which satisfy that
\begin{equation}
\label{condition MNI}
|N_G(H):H|\le p^k
\quad
\text{for every $H\not\trianglelefteq G$,}
\end{equation}
and which are not Dedekind groups?

In \cite{zha-gao}, Q.\ Zhang and Gao have classified all such groups for $k=1$, thus
answering a question of Berkovich \cite[Problem 116 (i)]{ber}.
Apart from the non-abelian groups of order $p^3$, we have the group given by the presentation
\[
\langle a,b \mid a^{p^2}=b^{p^2}=1,\ a^b=a^{1+p} \rangle,
\]
the three infinite families of $2$-groups of maximal class (dihedral,
semidihedral, and generalised quaternion), and these two other
families of $2$-groups:
\begin{equation}
\label{k=1 1st infinite family}
\langle a,b \mid a^{2^{n-2}}=b^4=1,\ a^b=a^{-1} \rangle,
\quad
\text{for $n\ge 4$,}
\end{equation}
and
\begin{equation}
\label{k=1 2nd infinite family}
\langle a,b \mid a^{2^{n-2}}=b^4=1,\ a^b=a^{-1+2^{n-3}} \rangle,
\quad
\text{for $n\ge 5$.}
\end{equation}
Observe that the order of these groups is at most $p^4$ for odd $p$, but can be arbitrarily large for $p=2$.
This different behaviour of the odd primes and the even prime is not particular to the case $k=1$.
As X.\ Zhang and Guo have shown in \cite{zha-guo}, for $p>2$ and arbitrary $k$, the order of the non-Dedekind (i.e. non-abelian) groups satisfying (\ref{condition MNI}) is bounded.
More precisely, they get the bound $|G|\le p^{(2k+1)(k+1)}$, which is valid under the seemingly weaker assumption that
\begin{equation}
\label{condition MNI star}
|N_G(\langle g \rangle):\langle g \rangle|\le p^k
\quad
\text{for every $\langle g \rangle\not\trianglelefteq G$.}
\end{equation}

A related problem can be raised about centralizers of elements.
If $G$ is a group and $g\in G$, we have the inclusion
$\langle g \rangle \le C_G(g)$.
If $g\in Z(G)$ then $C_G(g)$ is as large as the whole group $G$, but otherwise we can require that the index $|C_G(g):\langle g \rangle|$ should be small.
Thus we may ask what can be said about $G$ if $|C_G(g):\langle g \rangle|$
is bounded as $g$ runs over $G\smallsetminus Z(G)$, a question that we have addressed
in \cite{FLTT} in the case of finite groups.
In particular, if $G$ is a non-abelian finite $p$-group such that
\[
|C_G(g):\langle g \rangle|\le p^k
\quad
\text{for every $g\in G\smallsetminus Z(G)$,}
\]
we have proved that $|G|\le p^{2k+2}$ with the only exception of $Q_8$, and that this bound is sharp.
By similarity with condition (\ref{condition MNI star}) about normalizers, in this work we also
deal with the restriction
\begin{equation}
\label{condition MCI star}
|C_G(g):\langle g \rangle|\le p^k
\quad
\text{for every $\langle g \rangle \not\trianglelefteq G$.}
\end{equation}

Now we state the main results of the paper, Theorems A and B below.
Our goal is to study non-Dedekind finite $p$-groups satisfying any of the conditions
(\ref{condition MNI}), (\ref{condition MNI star}) or (\ref{condition MCI star}).
It is convenient to introduce the following notation: if $G$ is a non-Dedekind finite group, we define
\[
\mni(G)
=
\max \{ |N_G(H):H|\mid \text{$H$ is not normal in $G$} \},
\]
and its two variants,
\[
\mni^*(G)
=
\max \{ |N_G(\langle g \rangle):\langle g \rangle|
\mid \text{$\langle g \rangle$ is not normal in $G$} \},
\]
and
\[
\mci^*(G)
=
\max \{ |C_G(g):\langle g \rangle|
\mid \text{$\langle g \rangle$ is not normal in $G$} \}.
\]
This way, conditions (\ref{condition MNI}), (\ref{condition MNI star}) and
(\ref{condition MCI star}) can be rephrased as $\mni(G)\le p^k$, $\mni^*(G)\le p^k$ and
$\mci^*(G)\le p^k$, respectively.
Of course, in studying the groups satisfying any of these conditions, we may assume that
the equality holds.

Clearly, we have
\begin{equation}
\label{relation invariants}
\mci^*(G)\le \mni^*(G)\le \mni(G)
\end{equation}
for every non-Dedekind finite group $G$, and so (\ref{condition MCI star}) is the weaker of the three restrictions (\ref{condition MNI}), (\ref{condition MNI star}) and
(\ref{condition MCI star}).
As we will prove in Proposition \ref{mni=mni*}, \emph{if $G$ is a finite $p$-group\/} then actually $\mni(G)=\mni^*(G)$, and so (\ref{condition MNI star}) is not weaker than
(\ref{condition MNI}) in that case.
Thus we only need to deal with the two conditions $\mni(G)=p^k$ and $\mci^*(G)=p^k$. However, Theorems A and B are also stated in the case $\mni^*(G)=p^k$ for completeness.
It is important to stress that the equality $\mni(G)=\mni^*(G)$ does not hold in general for finite groups.
For instance, the alternating group $A_5$ is a counterexample.

In our first result we improve the aforementioned bound of X.\ Zhang and Guo for condition
(\ref{condition MNI}) and $p>2$, from a quadratic to a linear function in the exponent of
$p$.
The bound that we get is best possible, and is valid under the weaker hypothesis that
(\ref{condition MCI star}) holds.

\begin{thmA}
Let $G$ be a non-abelian finite $p$-group, where $p>2$.
If either $\mni(G)=p^k$, $\mni^*(G)=p^k$, or $\mci^*(G)=p^k$, then we have
$|G|\le p^{2k+2}$.
This bound is sharp under all three conditions.
\end{thmA}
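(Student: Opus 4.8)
The plan is first to collapse the three hypotheses into one. By the chain (\ref{relation invariants}) together with Proposition \ref{mni=mni*}, each of the assumptions $\mni(G)=p^k$, $\mni^*(G)=p^k$ and $\mci^*(G)=p^k$ implies $\mci^*(G)\le p^k$, so it suffices to prove $|G|\le p^{2k+2}$ under this single weakest hypothesis. Since $p$ is odd, a non-abelian $G$ is not a Dedekind group, hence some cyclic subgroup of $G$ is non-normal and the condition is not vacuous.

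Next I would compare the hypothesis with the one treated in \cite{FLTT}. The elements constrained by $\mci^*(G)\le p^k$ are exactly those $g$ with $\langle g\rangle\not\trianglelefteq G$, and this set is contained in $G\smallsetminus Z(G)$, whereas \cite{FLTT} constrains all of $G\smallsetminus Z(G)$. Consequently, if \emph{every} non-central element of $G$ generates a non-normal cyclic subgroup, then the present hypothesis coincides with that of \cite{FLTT}; as the only exception there is the $2$-group $Q_8$ and here $p$ is odd, we obtain $|G|\le p^{2k+2}$ at once. The whole difficulty is therefore concentrated in the opposite situation, where there exists $g_0\in G\smallsetminus Z(G)$ with $\langle g_0\rangle\trianglelefteq G$; such a $g_0$ is completely unconstrained by $\mci^*$, so the bound has to be recovered by a structural argument.

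To treat this case I would pass to a maximal abelian normal subgroup $A$ of $G$ containing $\langle g_0\rangle$, so that $C_G(A)=A$, and run a dichotomy on the cyclic subgroups of $A$. If some $a\in A$ generates a non-normal cyclic subgroup, then $A\le C_G(a)$ forces $|A:\langle a\rangle|\le p^k$, so $A$ has a cyclic subgroup of index at most $p^k$ and $|A|\le p^k\exp(A)$. If instead every cyclic subgroup of $A$ is normal in $G$, then $G$ acts on $A$ by power automorphisms, and this is precisely where the oddness of $p$ is indispensable: for odd $p$ the $p$-elements among the power automorphisms of an abelian $p$-group act uniformly as $a\mapsto a^{m}$ with $m\equiv 1\pmod p$, so $G/A=G/C_G(A)$ embeds into the (cyclic) $p$-part of $(\Z/\exp(A)\Z)^{\times}$ and is therefore cyclic, of order controlled by $\exp(A)$. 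The analogous statement fails for $p=2$, which is exactly the mechanism behind the unbounded $2$-groups in \cite{zha-gao}, so the hypothesis $p>2$ cannot be dropped here.

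The main obstacle, as I see it, is the \emph{simultaneous} control of $|A|$ and $|G:A|$: a non-normal cyclic subgroup bounds one of the two factors, while the self-centralizing property $C_G(A)=A$ together with the odd-$p$ automorphism analysis must bound the other, so that their product stays below $p^{2k+2}$. I expect this to force an optimal choice of witnessing element (one of largest possible order generating a non-normal cyclic subgroup) and a careful bookkeeping of the action on the cyclic group $\langle g_0\rangle$ through the $p$-part of its automorphism group. Finally, sharpness under all three conditions is obtained by exhibiting, for each $k\ge 1$, the metacyclic group
$$G_k=\langle a,b\mid a^{p^{k+1}}=b^{p^{k+1}}=1,\ a^b=a^{1+p^k}\rangle,$$
of order $p^{2k+2}$: here $\langle b\rangle$ is non-normal with $C_G(b)=\langle a^p,b\rangle$, whence $|C_G(b):\langle b\rangle|=p^k$, and a direct check shows $\mci^*(G_k)=\mni^*(G_k)=\mni(G_k)=p^k$. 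For $k=1$ this is exactly the group singled out in the classification of \cite{zha-gao}.
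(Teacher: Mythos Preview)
Your reduction to $\mci^*(G)\le p^k$ is correct and matches the paper's first step, and your sharpness example is exactly Example~\ref{bound best possible}. The shortcut through \cite{FLTT} is legitimate for the special situation in which every non-central element generates a non-normal cyclic subgroup, but this does not appear in the paper and, as you recognise, leaves the real work untouched.

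The genuine gap is that neither branch of your dichotomy is actually carried through to the bound $p^{2k+2}$. In the branch ``some $a\in A$ generates a non-normal subgroup'' you only obtain $|A|\le p^k\cdot o(a)\le p^k\exp(A)$, which bounds nothing without independent control of $\exp(A)$; and nothing is said about $|G:A|$. In the branch ``all cyclic subgroups of $A$ are normal'' you correctly note that $G/A$ is cyclic of order dividing $\exp(A)/p$, but the product $|A|\cdot|G/A|$ is not yet bounded by $p^{2k+2}$, and your ``careful bookkeeping'' is a placeholder for what in the paper is a full proposition (Proposition~\ref{all normal}). That argument is not just bookkeeping: one must replace the generator $g$ of $G/A$ by a suitable $h\in gA$ with $\langle h\rangle\cap B=1$ for the homocyclic top part $B$ of $A$, and this replacement requires the splitting lemma for cyclic extensions (Lemma~\ref{power=1}), which in turn rests on the binomial divisibility of Lemma~\ref{p-adic valuation}. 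Only with this $h$ does $\langle h^{p^{t-1}}\rangle$ become non-normal with a centralizer one can compute, and the inequalities $r(n-1)\le k-t+1$ and $|\Omega_{n-1}(A)|\le p^{k-t+n}$ then combine to give $|G|\le p^{2k+2}$.

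For the other branch the paper does \emph{not} stay with the maximal abelian normal subgroup $A$ at all: it instead selects a \emph{minimal} abelian normal subgroup $B$ containing a non-normal cyclic subgroup (minimal first in exponent, then in order), shows from this minimality that $G$ is $p$-central and that $B=\langle b_1\rangle\times B^*$ with $B^*$ elementary abelian, and then analyses $G/C_G(B)$ via the explicit automorphism group computed in Lemma~\ref{Aut B}. A sequence of claims, again hinging on Lemma~\ref{power=1}, produces a witness $g$ with $\langle g\rangle\cap\langle b_1\rangle=1$ and $\langle g^{p^{t-1}}\rangle$ non-normal, and only then does the centralizer estimate close to $|G|\le p^{2k+2}$. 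Your outline does not anticipate the need for this minimal choice of $B$, the $p$-central reduction, or the repeated use of the splitting lemma; without these ingredients the sharp bound is out of reach.
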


Now we deal with the case where $p=2$, which has only been considered before in the literature under condition (\ref{condition MNI}), and only for $k=1$.
We show that the finite $2$-groups satisfying any of the conditions (\ref{condition MNI}), (\ref{condition MNI star}) or (\ref{condition MCI star}) are either of bounded order, or belong to one of two infinite families $\FF_1$ and $\FF_2$, which we describe next.

Both families consist of $2$-groups of the form $G=\langle b, A \rangle$, where $A$ is normal abelian, and $b^2\in\Omega_1(A)$.
In the family $\FF_1$, we take $A$ of exponent $2^n$ and $a^b=a^s$ for every $a\in A$, where either $s=-1$ and $n\ge 1$, or $s=-1+2^{n-1}$ and $n\ge 3$.
These groups can be constructed with the help of the theory of cyclic extensions
(see Section III.7 of \cite{zas}), and any element in $\Omega_1(A)$ is a valid choice for $b^2$.
Observe that $Z(G)=C_A(b)=\Omega_1(A)$ for $n\ge 2$, and that the only Dedekind groups in the family $\FF_1$ correspond to $A\cong C_2\times \cdots \times C_2$, or to
$A\cong C_4\times C_2\times \cdots \times C_2$ and $b^2\in A^2\smallsetminus 1$.
If $G\in\FF_1$ is not a Dedekind group and $A$ is of rank $r$, then the values of
$\mni(G)$, $\mni^*(G)$, and $\mci^*(G)$ are as follows (see Theorem \ref{values for F1}):
\[
\mni(G)
=
\mni^*(G)
=
\begin{cases}
2^r,
&
\text{if $b^2\in A^2$,}
\\
2^{r-1},
&
\text{if $b^2\not\in A^2$,}
\end{cases}
\]
and
\[
\mci^*(G)
=
\begin{cases}
2^r,
&
\text{if $G\smallsetminus A$ contains an element of order $2$,}
\\
2^{r-1},
&
\text{otherwise.}
\end{cases}
\]
On the other hand, in the family $\FF_2$, we have $A=\langle a_1 \rangle \times A^*$, where $o(a_1)=2^n$ and $A^*$ is non-trivial of order $2^m$.
The action of $b$ on $A$ is given by $a_1^b=a_1^sz$ and $(a^*)^b=(a^*)^s$ for every
$a^*\in A^*$, where $z\in\Omega_1(A^*)$, $z\ne 1$, and either $s=-1$ or $s=-1+2^{n-1}$. We assume that $n\ge 2$ if $s=-1$, and that $n\ge 3$ and $n\ge m$ if $s=-1+2^{n-1}$.
(The condition $n\ge m$ guarantees that the automorphism induced by conjugation by $b$ is of order $2$ when $s=-1+2^{n-1}$.)
Again by the theory of cyclic extensions, for given $A$ and $s$, any choice of
$z\in\Omega_1(A^*)\smallsetminus 1$ and $b^2\in\Omega_1(A)$ will define a group in
$\FF_2$.
Since $\langle a_1 \rangle\not\trianglelefteq G$, the family $\FF_2$ consists entirely of non-Dedekind groups.
As above, we have $Z(G)=C_A(b)=\Omega_1(A)$.
In this case, we have (see Theorem \ref{values for F2})
\[
\mni(G)
=
\mni^*(G)
=
\begin{cases}
2^{m+1},
&
\parbox[t]{.45\textwidth}
{if $A^*$ is elementary abelian, and $b^2\in A^2$ or $b^2z\in A^2$.}
\\[15pt]
2^m,
&
\text{otherwise,}
\end{cases}
\]
and
\[
\mci^*(G)
=
\begin{cases}
2^{m+1},
&
\parbox[t]{.55\textwidth}
{if $A^*$ is elementary abelian, and $G\smallsetminus A$ contains an element of order $2$,}
\\[15pt]
2^m,
&
\text{otherwise.}
\end{cases}
\]

\vspace{10pt}

\noindent
Thus the values of $\mni(G)$, $\mni^*(G)$, and $\mci^*(G)$ vary with the rank of $A$ in the case of family $\FF_1$, and with the order of $A^*$, in the case of $\FF_2$.
In any case, they are independent of $n$, which allows to have $2$-groups of arbitrarily large order with a fixed value of any of the three invariants we are considering.
Our second main result shows that the groups in $\FF_1$ and $\FF_2$ are the only such examples.

\begin{thmB}
Let $G$ be a non-Dedekind finite $2$-group, and suppose that either
$\mni(G)=2^k$, $\mni^*(G)=2^k$, or $\mci^*(G)=2^k$.
Then there exists a polynomial function $f(k)$ of degree four such that,
if $|G|>2^{f(k)}$, then $G$ belongs to one of the families $\FF_1$ or $\FF_2$.
\end{thmB}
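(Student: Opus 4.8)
The plan is to reduce all three hypotheses to the single condition $\mci^*(G)\le 2^k$. Indeed, by Proposition~\ref{mni=mni*} and the chain \eqref{relation invariants} we have $\mci^*(G)\le\mni^*(G)=\mni(G)$, so each of $\mni(G)=2^k$, $\mni^*(G)=2^k$, $\mci^*(G)=2^k$ forces $\mci^*(G)\le 2^k$; thus throughout I would only use that the centralizer of every non-normal cyclic subgroup is small. The target is to manufacture, for $|G|$ large, an abelian normal subgroup $A$ of index $2$ on which a complement generator $b$ acts by a near-inversion $a\mapsto a^{s}$ with $s\in\{-1,-1+2^{n-1}\}$ and with $b^2\in\Omega_1(A)$, which is exactly the defining data of $\FF_1$ and $\FF_2$.

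The engine of the argument is the behaviour of the powers of an element of maximal order. First I would bound the rank: if $t$ is a non-central involution then $\langle t\rangle\not\trianglelefteq G$, whence $|C_G(t)|=|C_G(t):\langle t\rangle|\cdot 2\le 2^{k+1}$; since every elementary abelian subgroup not contained in $Z(G)$ lies in such a $C_G(t)$, the non-central rank is at most $k$, and one checks (using $Z(G)\le A$ below) that $\operatorname{rank}(G)\le k+1$. With the rank bounded, $|G|>2^{f(k)}$ forces the exponent $2^n$ to be large, so we may fix $a_1$ of order $2^n$ with $n$ large. The key observation is then that for every $j$ with $k<j<n$ the subgroup $\langle a_1^{2^{j}}\rangle$ must be normal in $G$, for otherwise $C_G(a_1^{2^{j}})\supseteq\langle a_1\rangle$ would give index at least $2^{j}>2^k$. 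Hence $U:=\langle a_1^{2^{k+1}}\rangle$ is a large cyclic \emph{normal} subgroup, while the low powers $a_1^{2^{j}}$ with $j\le k$, when non-normal, bound the abelian part transverse to $\langle a_1\rangle$.

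Set $A:=C_G(U)$, so that $\langle a_1\rangle\le A$ and $G/A$ embeds in the abelian group $\Aut(U)\cong\Aut(C_{2^{m}})$. The crux is to prove that \emph{every} element $g\in G\smallsetminus A$ induces on $U$ a near-inversion, i.e.\ that no outer element acts on $U$ as the identity or as the near-identity automorphism $u\mapsto u^{1+2^{m-1}}$. Granting this, the composition of two near-inversions is a near-identity, which is not a near-inversion; hence the image of $G$ in $\Aut(U)$ has a unique nontrivial element, so $C_G(U)=A$ is abelian and $[G:A]=2$. Write $G=\langle b,A\rangle$. Propagating the near-inversion on $U$ back through the powers to $\langle a_1\rangle$ and then across $A$ shows that $b$ acts as $a\mapsto a^{s}$ with $s\in\{-1,-1+2^{n-1}\}$; consequently $C_A(b)=Z(G)=\Omega_1(A)$, and since conjugation by $b$ has order $2$ we get $b^2\in C_A(b)=\Omega_1(A)$.

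\textbf{Main obstacle and the dichotomy.} The delicate point is ruling out the near-identity and identity actions of outer elements on $U$: this is where only the centralizer hypothesis is available, and it must be extracted by testing $\mci^*$ on carefully chosen non-normal cyclic subgroups $\langle a_1^{\varepsilon}w\rangle$, so that an outer $g$ centralizing too much would produce a cyclic subgroup with oversized centralizer, a contradiction; the two near-inversion types $s=-1$ and $s=-1+2^{n-1}$ have to be carried in parallel throughout. Once $A$ and the action are in hand, I would split into families according to whether $a_1^{b}\in\langle a_1\rangle$: if the inversion is uniform on all of $A$ we land in $\FF_1$, whereas if the cyclic generator picks up a nontrivial twist $a_1^{b}=a_1^{s}z$ with $z\in\Omega_1(A^*)\smallsetminus 1$ we land in $\FF_2$, and the requirement that conjugation by $b$ have order $2$ yields the constraints $n\ge 2$ (resp.\ $n\ge 3$ and $n\ge m$) in the definition of the families. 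Finally, bookkeeping the thresholds used above---$\operatorname{rank}(G)\le k+1$, the abelian width at most $2^k$, and the lower bound on $n$ needed to run both the normal-power step and the action analysis---yields an explicit threshold $f(k)$ of degree four, below which the only groups not covered are of bounded order.
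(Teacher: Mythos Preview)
Your reduction to $\mci^*(G)\le 2^k$ and the observation that $\langle a_1^{2^j}\rangle\trianglelefteq G$ for $j>k$ are correct and match the paper. The central gap is your choice $A:=C_G(U)$ with $U=\langle a_1^{2^{k+1}}\rangle$: from the fact that the image of $G$ in $\Aut(U)$ has order $2$ you conclude both $[G:A]=2$ \emph{and} that $A$ is abelian, but only the first follows. The centralizer of a cyclic normal subgroup in a $2$-group need not be abelian, and nothing in your sketch forces it here. The paper sidesteps this by taking $A$ to be a \emph{maximal abelian normal} subgroup from the outset (so $C_G(A)=A$ is automatic) and then proving $|G:A|=2$; this is not a cosmetic difference but the load-bearing choice of the whole argument.

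The ``propagation'' step is a second gap. Knowing that $b$ acts on $U$ as $u\mapsto u^s$ only determines the action on $a_1$ modulo $2^{n-k-1}$, and says nothing about the other direct factors of $A$. The paper obtains the uniform power action $a\mapsto a^s$ on a large subgroup via Proposition~\ref{when all normal}, after first checking that the cyclic subgroups $\langle a_1^{2^k}\rangle,\langle a_i\rangle,\langle a_1^{2^k}a_i\rangle$ are all normal; it then passes to $G/A^*$ (using Lemma~\ref{mci* in quotients}) to reduce case~(ii) to case~(i). Your acknowledged ``main obstacle''---excluding the action $u\mapsto u^{1+2^{m-1}}$---is exactly where the paper brings in Blackburn's classification of $2$-groups with $R(G)\ne 1$ together with Lemma~\ref{cyclic avoiding R(G)} to manufacture a non-normal cyclic subgroup meeting $\langle a_1\rangle$ trivially; without these tools that step is not merely delicate but absent. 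Finally, the degree-four threshold arises because $r\le k+1$ and $n\le k+1$ give $|A|\le 2^{(k+1)^2}$, and then $G/A\hookrightarrow\Aut A$ contributes another quadratic factor; your bookkeeping does not trace this.
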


Observe that the infinite families obtained by Q.\ Zhang and Gao in their classification of finite $2$-groups with $\mni(G)=2$ all belong to our family $\FF_1$, by choosing
$A\cong C_{2^{n-1}}$ in the case of $2$-groups of maximal class, and
$A\cong C_{2^{n-2}}\times C_2$ for the groups in (\ref{k=1 1st infinite family}) and
(\ref{k=1 2nd infinite family}).
On the other hand, note that the groups of the family $\FF_2$ are not present in the case $k=1$.
Indeed, according to the values of $\mni(G)$ given above, if $G$ lies in $\FF_2$ and
$\mni(G)=2$, then necessarily $A^*=\langle z \rangle$ is of order $2$, $b^2\not\in A^2$, and $b^2z\not\in A^2$.
Since also $z\not\in A^2$, it follows that the subgroup $\Omega_1(A)$, which is of order $4$, has three elements outside $A^2$.
Consequently $A^2=1$, and this implies that $n=1$, which is never the case in the family
$\FF_2$.

\vspace{10pt}

\noindent
\textit{Notation.\/}
We use standard notation in group theory.
In particular, $d(G)$ stands for the minimum number of generators
of a finitely generated group $G$.
We write $\exp G$ for the exponent of a finite group $G$.
If $G$ is a finite $p$-group and $i\ge 0$, then $\Omega_i(G)$ denotes the subgroup
generated by the elements of $G$ of order at most $p^i$, and $G^{p^i}$
is the subgroup generated by the $p^i$th powers of all elements of $G$.
We also put $\Omega_i(G)=1$ and $G^{p^i}=G$ for $i<0$.
On the other hand, we write $\Cl_G(g)$ for the conjugacy class of an element $g$ in a group $G$.

\section{Preliminary results}

In this section, we collect some preliminary results that are needed for
the proof of Theorems A and B.
First of all, we prove that $\mni(G)=\mni^*(G)$ for every non-Dedekind finite $p$-group
$G$.
We need the following lemma.

\begin{lem}
\label{normalizer and normal closure}
Let $G$ be a finite $p$-group, and let $g\in G$.
Then $|G:N_G(\langle g \rangle)|\le |\langle g \rangle^G:\langle g \rangle|$.
\end{lem}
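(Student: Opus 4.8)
The plan is to argue by induction on $|G|$. Write $C=\langle g\rangle$ and $N=\langle g\rangle^G$ for brevity, and put $t=|G:N_G(C)|$, the number of subgroups conjugate to $C$; the goal is to show $t\le |N:C|$. If $C\trianglelefteq G$, then $N=C$ and both sides equal $1$, so there is nothing to prove; this also handles the base of the induction. Hence I may assume $C$ is not normal, so that $N>C$ and $N_G(C)<G$. Since $G$ is a $p$-group, $Z(G)\ne 1$, and the whole idea is to quotient by a central subgroup $\langle z\rangle$ of order $p$, chosen as favourably as possible, and then invoke the inductive hypothesis on $\bar G=G/\langle z\rangle$. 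Writing bars for images in $\bar G$, in every case $\bar N=\langle\bar g\rangle^{\bar G}$ is the normal closure of $\bar C=\langle\bar g\rangle$ (being the image of $N$), so induction gives $|\bar G:N_{\bar G}(\bar C)|\le |\bar N:\bar C|$.

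The first case is $C\cap Z(G)\ne 1$, where I would take $z\in C\cap Z(G)$. Since $z$ is central and lies in $C$, it lies in every conjugate $C^x$, and one checks that $\bar C^{\bar x}=\bar C$ holds precisely when $C^x=C$; consequently $N_{\bar G}(\bar C)=\overline{N_G(C)}$ and $|\bar G:N_{\bar G}(\bar C)|=t$, while $|\bar N:\bar C|=|N:C|$ because we have divided both $N$ and $C$ by the same subgroup $\langle z\rangle\le C$. The inductive bound then reads exactly $t\le |N:C|$, as wanted.

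The genuinely harder case, which I expect to be the main obstacle, is $C\cap Z(G)=1$. Now any central $z$ of order $p$ lies outside $C$ and meets it trivially, so $\bar C\cong C$ and $C\langle z\rangle=C\times\langle z\rangle$. The difficulty is that the quotient no longer detects $N_G(C)$ exactly: conjugation by some $x$ may fix $\bar C$, i.e. satisfy $C^x\langle z\rangle=C\langle z\rangle$, without normalizing $C$ itself. To quantify this, let $P$ be the full preimage of $N_{\bar G}(\bar C)$, so that $N_G(C)\le P\le G$, $\;|G:P|=|\bar G:N_{\bar G}(\bar C)|$, and $t=|G:P|\cdot|P:N_G(C)|$. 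The crux is to bound the defect $|P:N_G(C)|$. For $x\in P$ the subgroup $C^x$ is a cyclic subgroup of $C\times\langle z\rangle$ of order $|C|$, and the $P$-conjugates of $C$ are exactly such subgroups, so $|P:N_G(C)|$ is the size of this orbit and is therefore bounded by the number of cyclic subgroups of order $|C|$ in $C\times\langle z\rangle\cong C_{p^a}\times C_p$. A direct count shows this number is $p$ when $a\ge 2$ and $p+1$ when $a=1$; since $|P:N_G(C)|$ is a power of $p$, in every case $|P:N_G(C)|\le p$.

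It then remains to combine the estimates according to whether $z\in N$ or not. If $z\notin N$, then $C\times\langle z\rangle$ meets $N$ in $C$ alone, so every $P$-conjugate of $C$, lying in both $N$ and $C\times\langle z\rangle$, must equal $C$; thus $P=N_G(C)$, and the inductive bound $|G:P|\le|\bar N:\bar C|=|N:C|$ gives $t\le|N:C|$ at once. If instead $z\in N$, then $|\bar N:\bar C|=|N:C|/p$, and multiplying the inductive bound $|G:P|\le|N:C|/p$ by the defect estimate $|P:N_G(C)|\le p$ again yields $t\le|N:C|$. This closes the induction and proves the lemma.
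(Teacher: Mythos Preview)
Your proof is correct, but it takes a genuinely different route from the paper's.

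The paper argues by a direct counting contradiction: writing $p^r=o(g)$, $p^s=|G:N_G(\langle g\rangle)|$, and $p^t=|\langle g\rangle^G:\langle g\rangle|$, it observes that the number $n_r$ of subgroups of order $p^r$ in $\langle g\rangle^G$ is at least $p^s$ (since all conjugates of $\langle g\rangle$ are distinct such subgroups), yet also $n_r\le (p^{r+t}-1)/\varphi(p^r)<p^{t+1}/(p-1)$ by counting elements of order $p^r$. Assuming $s\ge t+1$ makes these bounds incompatible, so $s\le t$. This is a three-line argument with no induction and no case analysis.

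Your approach instead proceeds by induction on $|G|$, quotienting by a central subgroup $\langle z\rangle$ of order $p$ and tracking how both $|G:N_G(C)|$ and $|N:C|$ change. The crucial step is controlling the defect $|P:N_G(C)|$ in the case $C\cap Z(G)=1$, which you do by bounding the number of cyclic subgroups of order $|C|$ inside $C\times\langle z\rangle$; the split into $z\in N$ versus $z\notin N$ then balances the books. Everything checks out: the identification $\bar N=\langle\bar g\rangle^{\bar G}$, the count of cyclic subgroups of order $p^a$ in $C_{p^a}\times C_p$, and the observation that $|P:N_G(C)|$ is a $p$-power are all correct.

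What each buys: the paper's counting proof is much shorter and exposes exactly why the $p$-group hypothesis is needed (the inequality $n_r<p^{t+1}/(p-1)$ fails to contradict $n_r\ge p^s$ without it). Your inductive proof is longer and splits into cases, but it is more structural and makes transparent which central elements are ``harmless'' to quotient by; it could be a useful template if one wanted to prove sharper or related statements where a crude element count is not available.
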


\begin{proof}
Put $p^r=o(g)$, $p^s=|G:N_G(\langle g \rangle)|$, and
$p^t=|\langle g \rangle^G:\langle g \rangle|$.
Let us assume, by way of  contradiction, that $s\ge t+1$.
Let $n_r$ be the number of subgroups of $\langle g \rangle^G$ of order $p^r$.
Observe that $n_r$ is at least the number of conjugates of $\langle g \rangle$ in $G$, that is, $p^s$.
Hence
\begin{equation}
\label{lower bound n_r}
n_r\ge p^{t+1}.
\end{equation}
On the other hand, $n_r$ equals the number of elements of order $p^r$ in
$\langle g \rangle^G$ divided by $\varphi(p^r)$.
Since $|\langle g \rangle^G|=p^{r+t}$, it follows that
\begin{equation}
\label{upper bound n_r}
n_r \le \frac{p^{r+t}-1}{\varphi(p^r)} < \frac{p^{t+1}}{p-1}.
\end{equation}
Now by comparing (\ref{lower bound n_r}) and (\ref{upper bound n_r}) we derive a contradiction.
\end{proof}

The previous result is not always valid if $G$ is not a finite $p$-group.
For example, if $g=(1\ 2)(3\ 4)$ then
$N_{A_4}(\langle g \rangle)=\langle g \rangle^{A_4}
=\langle (1\ 2)(3\ 4),(1\ 3)(2\ 4) \rangle$, the Klein four-group.
Thus $|A_4:N_{A_4}(\langle g \rangle)|>|\langle g \rangle^{A_4}:\langle g \rangle|$ in this case.

\begin{pro}
\label{mni=mni*}
Let $G$ be a finite $p$-group, and let $H$ be a subgroup of $G$.
Then
\[
|N_G(H):H|\le |N_G(\langle h \rangle):\langle h \rangle|
\]
for every $h\in H$.
As a consequence, if $G$ is not Dedekind then $\mni(G)=\mni^*(G)$.
\end{pro}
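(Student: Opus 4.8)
The plan is to deduce the main inequality by applying Lemma~\ref{normalizer and normal closure} not to $G$ itself but to the subgroup $N:=N_G(H)$, which is again a finite $p$-group. Fix $h\in H$ and write $M:=N_G(\langle h \rangle)$. The crucial observation is that, since $H$ is normal in $N$ and $h\in H$, every $N$-conjugate of $h$ lies in $H$; hence the normal closure $\langle h \rangle^N$ is contained in $H$, so that $|\langle h \rangle^N:\langle h \rangle|\le |H:\langle h \rangle|$. Since moreover $N_N(\langle h \rangle)=N\cap M$, Lemma~\ref{normalizer and normal closure} applied inside $N$ yields
\[
|N:N\cap M|\le |\langle h \rangle^N:\langle h \rangle|\le |H:\langle h \rangle|.
\]

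First I would convert this into the desired index inequality by a short computation using the chains $\langle h \rangle\le H\le N$ and $\langle h \rangle\le N\cap M\le N$ (note $\langle h \rangle\le N\cap M$ because $\langle h \rangle\le H\le N$ and $\langle h \rangle\le M$). Counting $|N:\langle h \rangle|$ along both chains gives
\[
|N:H|\,|H:\langle h \rangle|=|N:N\cap M|\,|N\cap M:\langle h \rangle|,
\]
and dividing by $|H:\langle h \rangle|$ while invoking the displayed bound $|N:N\cap M|\le|H:\langle h \rangle|$ leaves
\[
|N:H|\le |N\cap M:\langle h \rangle|\le |M:\langle h \rangle|,
\]
which is precisely $|N_G(H):H|\le |N_G(\langle h \rangle):\langle h \rangle|$, valid for every $h\in H$.

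For the consequence I would combine the main inequality with the elementary fact that a non-normal subgroup always contains a non-normal cyclic subgroup: if every $\langle h \rangle$ with $h\in H$ were normal in $G$, then $H=\langle h:h\in H\rangle$ would be a join of normal subgroups, hence normal, contradicting $H\not\trianglelefteq G$. Thus, choosing a non-normal $H$ with $|N_G(H):H|=\mni(G)$ and then picking $h\in H$ with $\langle h \rangle\not\trianglelefteq G$, the main inequality gives $\mni^*(G)\ge |N_G(\langle h \rangle):\langle h \rangle|\ge |N_G(H):H|=\mni(G)$. Since $\mni^*(G)\le\mni(G)$ holds by \eqref{relation invariants}, we conclude $\mni(G)=\mni^*(G)$.

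The index bookkeeping is routine; I expect the two genuinely load-bearing steps to be the decision to run Lemma~\ref{normalizer and normal closure} inside $N_G(H)$ rather than $G$ (so that the relevant normal closure is trapped inside $H$, converting the bound on $|\langle h \rangle^N:\langle h \rangle|$ into one on $|H:\langle h \rangle|$), and the observation that a non-normal subgroup harbours a non-normal cyclic subgroup, which is exactly what permits the passage from $\mni$ to $\mni^*$.
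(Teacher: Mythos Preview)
Your proof is correct and follows essentially the same route as the paper: apply Lemma~\ref{normalizer and normal closure} inside $N_G(H)$ so that $\langle h\rangle^{N_G(H)}\le H$, deduce $|N_G(H):N_G(H)\cap N_G(\langle h\rangle)|\le |H:\langle h\rangle|$, and rearrange to get the desired inequality. The paper leaves the derivation of the consequence implicit, whereas you spell out explicitly why a non-normal subgroup must contain a generator of a non-normal cyclic subgroup; this extra detail is correct and welcome.
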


\begin{proof}
By applying Lemma \ref{normalizer and normal closure} to the group $N_G(H)$ and the element $h$, we get
\[
|N_G(H):N_{N_G(H)}(\langle h \rangle)|
\le
| \langle h \rangle^{N_G(H)}:\langle h \rangle |.
\]
It follows that
\[
| N_G(H):N_G(\langle h \rangle) | \le | H:\langle h \rangle |,
\]
and consequently $|N_G(H):H|\le |N_G(\langle h \rangle):\langle h \rangle|$, as desired.
\end{proof}

The equality $\mni(G)=\mni^*(G)$ does not hold for all finite groups.
For example, we have $\mni(A_5)=3$ but $\mni^*(A_5)=2$.

\vspace{7pt}

Next, given a finite abelian subgroup $A$ of a group $G$, we
analyse when all subgroups of $A$ are normal in $G$.

\begin{pro}
\label{when all normal}
Let $A$ be a finite abelian subgroup of a group $G$.
Then the following are equivalent:
\begin{enumerate}
\item
All subgroups of $A$ are normal in $G$.
\item
All direct factors of $A$ are normal in $G$.
\item
For every direct product decomposition
$A=\langle a_1 \rangle \times \cdots \times \langle a_r \rangle$
with $o(a_1)=\exp A$, the subgroups
$\langle a_i \rangle$ and $\langle a_1a_j \rangle$ are normal
in $G$ for every $i=1,\ldots,r$ and $j=2,\ldots,r$.
\item
There is a direct product decomposition
$A=\langle a_1 \rangle \times \cdots \times \langle a_r \rangle$
with $o(a_1)=\exp A$,
such that the subgroups $\langle a_i \rangle$ and $\langle a_1a_j \rangle$
are normal in $G$ for every $i=1,\ldots,r$ and $j=2,\ldots,r$.
\item
For every $g\in G$, there exists an integer $s=s(g)$ such that
$a^g=a^s$ for every $a\in A$.
\end{enumerate}
If these properties are fulfilled, then $G/C_G(A)$ embeds in $\UU(\Z/e\Z)$,
where $e$ is the exponent of $A$.
\end{pro}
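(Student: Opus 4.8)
The plan is to prove the equivalence by the cyclic chain $(i)\Rightarrow(ii)\Rightarrow(iii)\Rightarrow(iv)\Rightarrow(v)\Rightarrow(i)$, and then to read off the final embedding from condition $(v)$. Two of these links are essentially free: $(i)\Rightarrow(ii)$ holds because a direct factor is in particular a subgroup, and $(iii)\Rightarrow(iv)$ holds because a decomposition $A=\langle a_1\rangle\times\cdots\times\langle a_r\rangle$ with $o(a_1)=\exp A$ always exists by the structure theorem for finite abelian groups, so a statement quantified over \emph{all} such decompositions yields one for \emph{some} decomposition.

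For $(ii)\Rightarrow(iii)$, the only thing to verify is that each subgroup appearing in $(iii)$ is genuinely a direct factor of $A$, so that its normality is granted by $(ii)$. The factors $\langle a_i\rangle$ are direct factors by definition. For $\langle a_1a_j\rangle$ I would exploit $o(a_1)=\exp A$: since $o(a_j)\mid o(a_1)$, we get $o(a_1a_j)=o(a_1)$, and replacing $a_1$ by $a_1a_j$ produces a new direct decomposition $A=\langle a_1a_j\rangle\times\langle a_2\rangle\times\cdots\times\langle a_r\rangle$, exhibiting $\langle a_1a_j\rangle$ as a direct factor.

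The substantive step is $(iv)\Rightarrow(v)$. Fixing $g\in G$, normality of $\langle a_i\rangle$ gives $a_i^g=a_i^{s_i}$ for integers $s_i$, and normality of $\langle a_1a_j\rangle$ gives $(a_1a_j)^g=(a_1a_j)^{t_j}$. Expanding the latter as $a_1^{s_1}a_j^{s_j}$ and comparing components in the direct decomposition forces $s_1\equiv t_j\pmod{o(a_1)}$ and $s_j\equiv t_j\pmod{o(a_j)}$; since $o(a_j)\mid o(a_1)$, these combine to $s_j\equiv s_1\pmod{o(a_j)}$, so that conjugation by $g$ acts as the $s_1$-th power map on every generator $a_i$. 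Because $a\mapsto a^{s_1}$ is an endomorphism of the abelian group $A$ that agrees with the automorphism $a\mapsto a^g$ on a generating set, the two maps coincide on all of $A$, which is $(v)$ with $s=s_1$. Then $(v)\Rightarrow(i)$ is immediate, since $a^g=a^s$ for all $a\in A$ gives $H^g\subseteq H$, hence $H^g=H$ by bijectivity of conjugation, for every subgroup $H\le A$. For the embedding I would set $e=\exp A=o(a_1)$ and define $\theta\colon G\to\UU(\Z/e\Z)$ by $g\mapsto s(g)\bmod e$; the class $s(g)$ is well defined modulo $e$ (its action on $a_1$ determines it), it is a unit because $a\mapsto a^{s(g)}$ is an automorphism of $A$, the relation $a^{gh}=a^{s(g)s(h)}$ shows $\theta$ is a homomorphism, and its kernel is precisely $C_G(A)$.

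I expect the main obstacle to be the bookkeeping in $(iv)\Rightarrow(v)$: one must use the two families of normal cyclic subgroups together — the factors $\langle a_i\rangle$ and the \emph{diagonal} subgroups $\langle a_1a_j\rangle$ — and invoke the divisibility $o(a_j)\mid o(a_1)$ provided by $o(a_1)=\exp A$, in order to force a single common exponent $s$ across all factors. The diagonal subgroups are exactly what rules out the possibility of each cyclic factor being raised to its own, unrelated power.
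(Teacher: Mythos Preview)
Your proof is correct and follows essentially the same route as the paper: the same trivial implications, the same observation that $o(a_1a_j)=\exp A$ makes $\langle a_1a_j\rangle$ a direct factor (the paper cites \cite[2.1.2]{kur-ste} for this, whereas you write out the replacement $a_1\mapsto a_1a_j$ explicitly), and an identical computation for $(iv)\Rightarrow(v)$. Your treatment of the final embedding is more detailed than the paper's one-line remark, but the content is the same.
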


\begin{proof}
It is clear that (ii) follows from (i), that (iv) follows from (iii),
and that (i) is a consequence of (v).
We complete the equivalence of the five conditions by showing that (ii)
implies (iii), and that (iv) implies (v).

Let $A=\langle a_1 \rangle \times \cdots \times \langle a_r \rangle$
be a decomposition with $o(a_1)=\exp A$.
Then also $o(a_1a_j)=\exp A$ for every $j=2,\ldots,r$, and $\langle a_1a_j \rangle$ is a direct factor of $A$ by \cite[2.1.2]{kur-ste}.
Thus (iii) follows from (ii).

Let now $A=\langle a_1 \rangle \times \cdots \times \langle a_r \rangle$
be a direct decomposition of $A$ which fulfils the conditions in (iv).
Then for every $g\in G$ and every $i=1,\ldots,r$ and $j=2,\ldots,r$, there exist integers
$s_i$, $t_j$ such that $a_i^g=a_i^{s_i}$ and $(a_1a_j)^g=(a_1a_j)^{t_j}$.
Consequently
\[
a_1^{t_j}a_j^{t_j} = (a_1a_j)^g = a_1^g a_j^g = a_1^{s_1} a_j^{s_j},
\]
and so $t_j\equiv s_1 \pmod{o(a_1)}$, and $t_j\equiv s_j \pmod{o(a_j)}$ for every $j=2,\ldots,r$.
Since $o(a_1)=\exp A$, it follows that $o(a_j)$ divides $o(a_1)$,
and consequently $s_j\equiv s_1 \pmod{o(a_j)}$.
Thus $a_j^g = a_j^{s_j} = a_j^{s_1}$, and (v) holds with $s=s_1$.

Finally, observe that the last assertion of the theorem follows immediately from (v).
\end{proof}

If $G$ is a non-Dedekind finite group, we denote by $R(G)$ the intersection
of all non-normal subgroups of $G$.
Note that $R(G)$ coincides with the intersection of all non-normal
\emph{cyclic\/} subgroups of $G$.
We will need the following result of Blackburn \cite[Theorem 1]{bla}:
if $G$ is a non-Dedekind finite $p$-group and $R(G)\ne 1$, then $p=2$,
$R(G)$ is of order $2$, and $G$ belongs to one of the following types:
\begin{enumerate}
\item[(R1)]
Isomorphic to $Q_8\times C_4\times E$, where $E$ is elementary abelian.
\item[(R2)]
Isomorphic to $Q_8\times Q_8\times E$, where $E$ is elementary abelian.
\item[(R3)]
A $Q$-group which is not a Dedekind group.
\end{enumerate}
Here, a $Q$-group is a group $G=\langle A, b \rangle$, where $A$ is abelian
but not elementary abelian, $a^b=a^{-1}$ for all $a\in A$, and $b^2\in A$
is of order $2$.
Note that all Hamiltonian groups (i.e.\ non-abelian Dedekind groups) are
$Q$-groups, as well as all generalised quaternion groups.
On the other hand, $Q$-groups belong to the family $\FF_1$ that we have defined in the introduction.

\vspace{10pt}

The following remark will be useful in the proof of our results.

\begin{lem}
\label{cyclic avoiding R(G)}
Let $G$ be a non-Dedekind finite $p$-group, and let $C$ be a cyclic subgroup of $G$.
Then there exists a non-normal cyclic subgroup $C^*$ of $G$ such that
$|C\cap C^*|\le |R(G)|$.
\end{lem}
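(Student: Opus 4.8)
The plan is to exploit two facts. First, since $C$ is cyclic of $p$-power order, its subgroups are totally ordered by inclusion, and for each divisor $p^i$ of $|C|$ it has a \emph{unique} subgroup of order $p^i$. Second, as noted just before the statement, $R(G)$ is the intersection of all non-normal cyclic subgroups of $G$; this intersection ranges over a non-empty set, because a finite $p$-group all of whose cyclic subgroups are normal is already a Dedekind group (if $\langle h\rangle\trianglelefteq G$ for all $h$, then $H^g=\langle h^g:h\in H\rangle\le H$ for every subgroup $H$ and every $g$), contrary to our hypothesis. Thus non-normal cyclic subgroups exist and their intersection is exactly $R(G)$.

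I would first dispose of the trivial case $|C|\le |R(G)|$: here any non-normal cyclic subgroup $C^*$ of $G$ does the job, since $|C\cap C^*|\le |C|\le |R(G)|$. So assume $|C|>|R(G)|$. As both orders are powers of $p$, this forces $p\,|R(G)|$ to divide $|C|$, so $C$ contains a unique subgroup $D$ with $|D|=p\,|R(G)|$.

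The heart of the argument is to produce a non-normal cyclic subgroup $C^*$ that does \emph{not} contain $D$. I would argue by contradiction: if every non-normal cyclic subgroup of $G$ contained $D$, then $D$ would lie in their intersection, which is $R(G)$; but $|D|=p\,|R(G)|>|R(G)|$, a contradiction. Hence such a $C^*$ exists. Now both $C\cap C^*$ and $D$ are subgroups of the cyclic group $C$, so they are comparable; since $D\not\le C^*$ we cannot have $D\le C\cap C^*$, and therefore $C\cap C^*<D$ strictly. Consequently $|C\cap C^*|<|D|=p\,|R(G)|$, and since these orders are powers of $p$ this yields $|C\cap C^*|\le |R(G)|$, as required.

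The only delicate point I anticipate is the elementary bookkeeping that guarantees $D$ exists and is unique, namely that $p\,|R(G)|$ divides $|C|$ and that a cyclic $p$-group has exactly one subgroup of each admissible order; the rest is a clean consequence of the linear ordering of subgroups of $C$. In particular, this route needs no structural input: it avoids Blackburn's classification of the groups with $R(G)\ne 1$ and relies solely on the description of $R(G)$ as the intersection of the non-normal cyclic subgroups.
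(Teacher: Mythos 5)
Your proof is correct and rests on exactly the same two ingredients as the paper's own argument: the total ordering of the subgroups of a cyclic finite $p$-group, and the characterization of $R(G)$ as the intersection of the non-normal \emph{cyclic} subgroups (so that any subgroup of order exceeding $|R(G)|$ must be avoided by some non-normal cyclic subgroup). The difference is purely organizational: the paper chooses $C^*$ so that $|C\cap C^*|$ is minimal and derives a contradiction from that minimality, whereas you locate the subgroup $D\le C$ of order $p\,|R(G)|$ and directly exhibit a non-normal cyclic $C^*$ with $D\not\le C^*$, after which comparability inside $C$ finishes the proof; both routes are equally elementary and neither needs Blackburn's classification.
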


\begin{proof}
Let $C^*$ be a non-normal cyclic subgroup of $G$ for which the intersection $C\cap C^*$ has minimum order, and assume by way of contradiction that $|C\cap C^*|>|R(G)|$.
Then, by the definition of $R(G)$, there exists a non-normal cyclic subgroup $D$ of $G$ such that $C\cap C^*\not\le D$.
Since $C$ is a cyclic finite $p$-group, we have either $C\cap C^*\le C\cap D$ or
$C\cap D<C\cap C^*$.
Now the former case is impossible, since $C\cap C^*\not\le D$, and the latter is contrary to the choice of $C^*$.
This contradiction proves the claim.
\end{proof}

If $G$ is a non-Dedekind finite $p$-group, then $\mni(G)$ and $\mni^*(G)$ are greater than $1$, since $G$ satisfies the normalizer condition.
It may happen however that $\mci^*(G)=1$, but only in very few cases, as we next show.

\begin{lem}
\label{mci^*(G)=1}
Let $G$ be a non-Dedekind finite $p$-group.
Then $\mci^*(G)=1$ if and only if $p=2$ and $G\cong Q_{2^n}$ is a generalised quaternion group, with $n\ge 4$.
\end{lem}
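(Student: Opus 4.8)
The statement is an equivalence, so I would prove the two implications separately, the forward (``only if'') direction being the substantial one. Throughout, recall that $\mci^*(G)=1$ means precisely that $C_G(g)=\langle g\rangle$ for \emph{every} non-normal cyclic subgroup $\langle g\rangle$ of $G$.

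For the forward direction, the plan is to show that $G$ has a unique subgroup of order $p$ and then to invoke the classical theorem that a finite $p$-group with a unique subgroup of order $p$ is either cyclic or (only when $p=2$) generalised quaternion. First I would use that $G$ is non-Dedekind to produce a non-normal cyclic subgroup $\langle g\rangle$: this exists because $R(G)$ is an intersection of non-normal \emph{cyclic} subgroups, as noted in the excerpt. Since $C_G(g)=\langle g\rangle$, we get $Z(G)\le C_G(g)=\langle g\rangle$, so $Z(G)$ is cyclic. The crucial step is then to prove that every subgroup of order $p$ is normal in $G$. Indeed, if $\langle w\rangle$ had order $p$ and were non-normal, the hypothesis would force $C_G(w)=\langle w\rangle$, whence $Z(G)\le\langle w\rangle$ and therefore $Z(G)=\langle w\rangle$ (both being of order $p$ and $Z(G)\ne 1$); but this contradicts the non-normality of $\langle w\rangle$. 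A normal subgroup of order $p$ in a $p$-group is automatically central, so every subgroup of order $p$ lies inside the cyclic group $Z(G)$, which has exactly one such subgroup. Hence $G$ has a unique subgroup of order $p$.

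Applying the classification just mentioned, $G$ is cyclic or, for $p=2$, generalised quaternion. The cyclic case is abelian and hence Dedekind, so it is excluded by hypothesis; for odd $p$ this is the only possibility, so no such $G$ exists and in particular $p=2$. We are therefore left with $G\cong Q_{2^n}$, and non-Dedekindness rules out $Q_8$, forcing $n\ge 4$. For the converse I would check directly that $Q_{2^n}$ with $n\ge 4$ satisfies $\mci^*(G)=1$. Writing $Q_{2^n}=\langle a,b\rangle$ with $\langle a\rangle$ the cyclic maximal subgroup, all subgroups of $\langle a\rangle$ are normal, so the only non-normal cyclic subgroups are the $\langle a^i b\rangle$ of order $4$; a short computation (the unique involution $a^{2^{n-2}}$ is central, while $a$ fails to centralize $a^i b$ for $n\ge 3$) shows that each such subgroup is self-centralizing, giving $\mci^*(Q_{2^n})=1$.

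The main obstacle is the middle step of the forward direction: recognising that the self-centralizing hypothesis on order-$p$ subgroups, together with the cyclicity of $Z(G)$, collapses all minimal subgroups of $G$ into the centre. Once the unique-minimal-subgroup property is secured, the conclusion is immediate from the standard classification. A minor but essential point to watch is that $Z(G)$ is genuinely cyclic; this is guaranteed by first extracting, from non-Dedekindness, a self-centralizing non-normal cyclic subgroup containing the centre.
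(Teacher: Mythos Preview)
Your argument is correct. The route, however, differs from the paper's. The paper shows that $Z(G)\le\langle g\rangle$ for every non-normal cyclic $\langle g\rangle$, hence $Z(G)\le R(G)$, and then invokes Blackburn's classification of finite $p$-groups with $R(G)\ne 1$: this forces $p=2$, $|R(G)|=2$, and $G$ to be of type (R1), (R2) or (R3); the first two types are discarded because they have $|Z(G)|\ge 4$, and among the $Q$-groups of type (R3) only generalised quaternion groups have centre of order~$2$. Your approach bypasses Blackburn entirely: from the self-centralising hypothesis you extract that every subgroup of order $p$ is normal, hence central, hence contained in the cyclic centre, so $G$ has a unique minimal subgroup; the classical cyclic-or-quaternion theorem then finishes. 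Your argument is more self-contained and elementary, needing only the standard unique-minimal-subgroup classification rather than the more specialised Blackburn result; the paper's version, on the other hand, fits naturally into the surrounding machinery, since $R(G)$ and Blackburn's theorem are already in play for later sections.
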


\begin{proof}
Assume first that $\mci^*(G)=1$.
If $\langle g \rangle$ is not normal in $G$, then $C_G(g)=\langle g \rangle$,
and consequently $Z(G)\le \langle g \rangle$.
Thus $Z(G)\le R(G)$.
Since $G$ is a finite $p$-group, it follows that $Z(G)=R(G)$ is of order $2$,
and $G$ is isomorphic to one of the groups given in (R1), (R2), and (R3).
Then $G$ is necessarily a generalised quaternion group, since otherwise
$|Z(G)|\ge 4$.
The converse can be easily checked.
\end{proof}

The following result is a particular case of a theorem of Kummer about the $p$-adic valuation of a binomial coefficient \cite[Theorem 10.2.2]{and-and}.

\begin{lem}
\label{p-adic valuation}
Let $p$ be a prime, and let $m\in\N$.
Then for every $1\le i\le p^m$, if $p^{\ell}$ is the highest power of $p$ which divides $i$, the binomial coefficient $\binom{p^m}{i}$ is divisible by $p^{m-\ell}$.
As a consequence, if $p>2$ and $2\le i\le m+1$ then $\binom{p^m}{i}$ is divisible by $p^{m-i+2}$.
\end{lem}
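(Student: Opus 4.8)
The statement to prove is Lemma \ref{p-adic valuation}, which has two parts. The first part is essentially Kummer's theorem on the $p$-adic valuation of binomial coefficients, and the second is a consequence tailored for odd primes.

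My plan for the first part is to invoke the standard formula for the $p$-adic valuation of a factorial and of a binomial coefficient. The plan is to use the identity
\[
v_p\left(\binom{p^m}{i}\right) = \frac{s_p(i) + s_p(p^m - i) - s_p(p^m)}{p-1},
\]
where $s_p(\cdot)$ denotes the digit sum in base $p$ and $v_p$ the $p$-adic valuation; equivalently, by Kummer's theorem, this valuation equals the number of carries when adding $i$ and $p^m-i$ in base $p$. Alternatively, and perhaps more cleanly for a self-contained argument, I would start from the elementary identity
\[
\binom{p^m}{i} = \frac{p^m}{i}\binom{p^m-1}{i-1},
\]
which immediately gives $v_p\left(\binom{p^m}{i}\right) = m - \ell + v_p\left(\binom{p^m-1}{i-1}\right) \ge m-\ell$, where $p^\ell \| i$. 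This yields the first assertion directly, since the remaining binomial coefficient is an integer and hence contributes a nonnegative valuation. This is the short route and I expect it to be the cleanest path to the stated divisibility by $p^{m-\ell}$.

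For the second part, I would specialize to $p > 2$ and the range $2 \le i \le m+1$. The goal is to show $p^{m-i+2}$ divides $\binom{p^m}{i}$. Using the first part, it suffices to show that $\ell \le i-2$, i.e. that the exact power $p^\ell$ dividing $i$ satisfies $p^\ell \le p^{i-2}$. This reduces to a purely number-theoretic inequality: if $p^\ell \mid i$ with $2 \le i$, then $\ell \le i-2$ whenever $p > 2$. The idea is that $p^\ell \le i$ forces $\ell \le \log_p i$, and since $p \ge 3$ one has $p^{i-2} \ge 3^{i-2} \ge i$ for all $i \ge 2$ (checking the base cases $i=2,3$ and then an easy induction, as $3^{i-2}$ grows faster than $i$). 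Combined with $p^\ell \le i \le p^{i-2}$, this gives $\ell \le i-2$, and hence $m - \ell \ge m - i + 2$, as required.

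The main obstacle, such as it is, lies in the second part: one must be careful that the elementary inequality $p^{i-2} \ge i$ genuinely holds for all primes $p > 2$ across the whole range $2 \le i \le m+1$, with attention to the boundary cases. For $i = 2$ we need $p^0 = 1 \ge \ell$, which holds since $i=2$ forces $\ell \le 1$ and in fact $\ell \le 1$; the delicate point is when $p \mid i$ exactly, so for $p=3, i=3$ one has $\ell = 1 = i-2$, giving equality, which is consistent. The hypothesis $p>2$ is essential precisely here, since for $p=2$ the inequality $2^{i-2} \ge i$ fails for small $i$ (e.g. $i=2,3$), and correspondingly the sharper conclusion need not hold. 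Once this inequality is verified, the conclusion follows mechanically from the first part, so I anticipate no further difficulty.
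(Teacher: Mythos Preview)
Your approach is correct and in fact goes beyond what the paper does: the paper gives no proof at all of this lemma, simply citing it as a particular case of Kummer's theorem from \cite{and-and}. Your route via the identity $\binom{p^m}{i}=\frac{p^m}{i}\binom{p^m-1}{i-1}$ is a clean self-contained argument for the first assertion, and the reduction of the second assertion to the inequality $\ell\le i-2$ is the right idea.

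One small point to tighten: your discussion of the boundary case $i=2$ is garbled. You write ``we need $p^0=1\ge\ell$, which holds since $i=2$ forces $\ell\le 1$,'' but what is actually required is $\ell\le i-2=0$, not $\ell\le 1$. The correct justification is simply that $p>2$ implies $p\nmid 2$, hence $\ell=0=i-2$. Relatedly, the inequality $p^{i-2}\ge i$ that you invoke \emph{fails} at $i=2$ (it reads $1\ge 2$), so you cannot use it uniformly across $2\le i\le m+1$; you must treat $i=2$ separately via $p\nmid 2$, and then for $i\ge 3$ the inequality $3^{i-2}\ge i$ (hence $p^{i-2}\ge i$) does hold and yields $\ell\le i-2$ as you outline. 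With that adjustment the argument is complete.
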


If $y$ is an element of a group $G$ such that the normal closure $\langle y \rangle^G$ is abelian, then we have
\begin{equation}
\label{power of product}
(xy)^n = x^n y^n [y,x]^{\binom{n}{2}} [y,x,x]^{\binom{n}{3}} \ldots
[y,x,\overset{n-1}{\ldots},x]^{\binom{n}{n}},
\end{equation}
for every $x\in G$ and for every $n\in\N$.
Similarly, if the derived subgroup of $\langle x,y \rangle$ is abelian, then
\begin{equation}
\label{commutator with power}
[x^n,y]  = [x,y]^n [x,y,x]^{\binom{n}{2}} [x,y,x,x]^{\binom{n}{3}} \ldots
[x,y,x,\overset{n-1}{\ldots},x]^{\binom{n}{n}}.
\end{equation}

The following lemma is well-known to experts (it can be used, for example, to show that certain metacyclic $p$-groups are split).
However, since we have not found a clear reference in the literature, we have decided to include it, for the convenience of the reader, in the precise form that we are going to need it.
Recall that, if $G=\langle g \rangle$ is a finite $p$-group and $p>2$, then the (only) Sylow $p$-subgroup of $\Aut G$ is cyclic, generated by the automorphism $g\mapsto g^{1+p}$.

\begin{lem}
\label{power=1}
Let $G$ be a finite $p$-group, where $p$ is an odd prime, and let $K$ be a normal cyclic subgroup of $G$, of order $p^s$.
If $g^{p^t}\in K^{p^t}$ for some positive integer $t\le s$, then there exists
$h\in gK$ such that  $h^{p^t}=1$ and $\langle h \rangle \cap K=1$.
\end{lem}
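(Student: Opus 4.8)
The plan is to look for the required element directly in the form $h=gk^{a}$, where $K=\langle k\rangle$ has order $p^{s}$ and $a$ is an integer to be determined, and to pin $a$ down by computing the relevant powers of $h$. Since $G$ is a $p$-group and $p$ is odd, conjugation by $g$ is a $p$-element of $\Aut K$, hence a power of the automorphism $k\mapsto k^{1+p}$; thus $k^{g}=k^{\lambda}$ for some integer $\lambda\equiv 1\pmod p$, so that $v_{p}(\lambda-1)\ge 1$. As $\langle k^{a}\rangle^{G}\le K$ is abelian, formula (\ref{power of product}) applies with $x=g$ and $y=k^{a}$, and since an easy induction gives $[k^{a},{}_{j}g]=k^{a(\lambda-1)^{j}}$ (the left-normed commutator with $j$ copies of $g$), it yields, for every $i\ge 0$,
\[
(gk^{a})^{p^{i}}=g^{p^{i}}\,k^{aN_{i}},\qquad N_{i}=\sum_{j=0}^{p^{i}-1}(\lambda-1)^{j}\binom{p^{i}}{j+1}.
\]

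Next I would establish the crucial fact that $v_{p}(N_{i})=i$, where $v_{p}$ is the $p$-adic valuation. The term $j=0$ equals $\binom{p^{i}}{1}=p^{i}$, of valuation exactly $i$. For $j\ge 1$, Lemma \ref{p-adic valuation} gives $v_{p}\binom{p^{i}}{j+1}=i-v_{p}(j+1)$, while $v_{p}(\lambda-1)\ge 1$, so the $j$-th term has valuation at least $i+\bigl(j-v_{p}(j+1)\bigr)$; since $v_{p}(j+1)\le\log_{p}(j+1)<j$ for every $j\ge 1$ when $p\ge 3$, each term with $j\ge 1$ has valuation strictly larger than $i$. Hence the minimum is attained only at $j=0$ and $v_{p}(N_{i})=i$ (in the degenerate case $\lambda=1$ only that term survives and $N_{i}=p^{i}$). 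This is exactly the point where the hypothesis $p>2$ is indispensable, and I expect it to be the technical heart of the proof.

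Finally I would select the correct exponent. Let $p^{u}$ be the order of $gK$ in $\langle g,K\rangle/K$, so $u\le t$ and $g^{p^{u}}=k^{c}$ for some integer $c$. Writing $g^{p^{t}}=(g^{p^{u}})^{p^{t-u}}=k^{cp^{t-u}}$ and using $g^{p^{t}}\in K^{p^{t}}$, we get $cp^{t-u}\equiv p^{t}b\pmod{p^{s}}$ for some integer $b$; as $t\le s$ this forces $p^{t}\mid cp^{t-u}$, hence $p^{u}\mid c$, i.e.\ $g^{p^{u}}\in K^{p^{u}}$. Since $v_{p}(N_{u})=u\le v_{p}(c)$ and $o(k)=p^{s}$, the congruence $c+aN_{u}\equiv 0\pmod{p^{s}}$ is solvable, and for such an $a$ the displayed identity gives $h^{p^{u}}=k^{c+aN_{u}}=1$. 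As $hK=gK$ has order $p^{u}$, we obtain $o(h)=p^{u}$, whence $h^{p^{t}}=1$ and $\langle h\rangle\cap K=\langle h^{p^{u}}\rangle=1$, as required. The one real subtlety, besides the valuation computation, is this passage from the hypothesis at level $t$ to the true order $p^{u}$ of $gK$: merely solving the congruence at level $t$ would force $o(h)=p^{t}$ and need not yield $\langle h\rangle\cap K=1$.
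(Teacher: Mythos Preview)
Your proof is correct and follows essentially the same route as the paper's: reduce from level $t$ to the true order $p^{u}$ of $gK$ (using $t\le s$ to get $g^{p^{u}}\in K^{p^{u}}$), then apply formula~(\ref{power of product}) together with Lemma~\ref{p-adic valuation} to control the commutator terms, and conclude $h^{p^{u}}=1$ and $\langle h\rangle\cap K=1$. The only cosmetic difference is that the paper first picks $y\in K$ with $y^{p^{m}}=g^{p^{m}}$, sets $h=gy^{-1}$, and then verifies that the higher commutator terms in $h^{p^{m}}$ vanish, whereas you parametrise $h=gk^{a}$, compute the exponent $N_{u}$ explicitly, show $v_{p}(N_{u})=u$, and solve the congruence $c+aN_{u}\equiv 0\pmod{p^{s}}$; the underlying estimate is identical.
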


\begin{proof}
Let $p^m$ be the order of $g$ modulo $K$.
Then $m\le t$, and
\[
| \langle g^{p^m} \rangle : K^{p^t} |
\le
| \langle g^{p^m} \rangle : \langle g^{p^t} \rangle |
\le
p^{t-m}
=
|K^{p^m}:K^{p^t}|.
\]
(Note that we need the condition $t\le s$ for the last equality to hold.)
Hence $| \langle g^{p^m} \rangle | \le |K^{p^m}|$.
Since $g^{p^m}\in K$ and $K$ is cyclic, it follows that $g^{p^m}\in K^{p^m}$.
Let $y\in K$ be such that $g^{p^m}=y^{p^m}$, and put $h=gy^{-1}$.
Let also $p^n$ be the order of $y$.

Since $\langle y \rangle$ is normal in $G$, we can use (\ref{power of product}) and get
\begin{multline}
\label{power of h}
h^{p^m}
=
g^{p^m} y^{-p^m} [y^{-1},g]^{\binom{p^m}{2}}
[y^{-1},g,g]^{\binom{p^m}{3}}
\ldots
[y^{-1},g,\overset{i-1}{\ldots},g]^{\binom{p^m}{i}}
\\
\ldots
[y^{-1},g,\overset{p^m-1}{\ldots},g]^{\binom{p^m}{p^m}}.
\end{multline}
Now, observe that $g^{p^m}$ acts as the identity on $\langle y \rangle$, so
$\langle g \rangle$ embeds as a subgroup of order at most $p^m$ in
$\Aut \langle y \rangle$.
Since $p>2$, the only subgroup of order $p^m$ in $\Aut \langle y \rangle$ is generated by the automorphism $y\longmapsto y^{1+p^{n-m}}$.
This implies that $[y^{-1},g]\in \langle y^{p^{n-m}} \rangle =\Omega_m(\langle y \rangle)$.
Since this subgroup is normal in $G$, it follows that
\[
[y^{-1},g,\overset{i-1}{\ldots},g] \in \Omega_{m-i+2}(\langle y \rangle)
\]
for every $i\ge 2$, and so
\begin{equation}
\label{eq1}
[y^{-1},g,\overset{i-1}{\ldots},g] ^{p^{m-i+2}} = 1
\quad
\text{for $2\le i\le m+1$,}
\end{equation}
and
\begin{equation}
\label{eq2}
[y^{-1},g,\overset{i-1}{\ldots},g] = 1
\quad
\text{for $i>m+1$.}
\end{equation}
Since, according to Lemma \ref{p-adic valuation}, the binomial coefficient
$\binom{p^m}{i}$ is divisible by $p^{m-i+2}$ for $2\le i\le m+1$, it follows from
(\ref{power of h}), (\ref{eq1}) and (\ref{eq2}) that $h^{p^m} = g^{p^m} y^{-p^m} = 1$.
Thus also $h^{p^t}=1$.

Finally, observe that $o(hK)=o(gK)=p^m$ in the quotient group $G/K$.
Since $h^{p^m}=1$, this implies that $\langle h \rangle \cap K=1$, and we are done.
\end{proof}

\begin{rmk}
The restriction $t\le s$ in the statement of the previous lemma is needed to avoid artificial counterexamples.
For instance, if $G=\langle g \rangle$ is cyclic of order $p^t$ and $K$ is a non-trivial proper subgroup of $G$, then $g^{p^t}\in K^{p^t}$, but it is impossible to find an element $h$ as in
Lemma \ref{power=1}.
\end{rmk}

\section{Odd primes}

In this section we prove Theorem A.
Since $\mci^*(G)\le \mni^*(G)=\mni(G)$, it suffices to prove the result when
$\mci^*(G)=p^k$.
We begin with a particular case.

\begin{pro}
\label{all normal}
Let $p$ be an odd prime, and let $G$ be a non-abelian finite $p$-group such that
$\mci^*(G)=p^k$.
If $G$ possesses a maximal abelian normal subgroup all of whose subgroups are normal in $G$, then
$|G|\le p^ {2k+2}$.
\end{pro}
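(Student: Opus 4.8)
The plan is to exploit the very rigid structure forced by the hypothesis. Write $A$ for the given maximal abelian normal subgroup, all of whose subgroups are normal in $G$. First I would record that $A$ is self-centralizing, i.e.\ $C_G(A)=A$: this is the standard fact that a maximal abelian normal subgroup of a nilpotent group equals its own centralizer, since if $C_G(A)>A$ a central element of $G/A$ lying in $C_G(A)/A$ would enlarge $A$. By Proposition~\ref{when all normal}, $G/A=G/C_G(A)$ embeds in $\UU(\Z/p^n\Z)$, where $p^n=\exp A$. As $p$ is odd this unit group is cyclic, so the $p$-group $G/A$ is cyclic, and (since $G$ is non-abelian) of order $p^m$ with $m\ge1$. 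Thus I may write $G=\langle A,g\rangle$ with $|G:A|=p^m$, and by Proposition~\ref{when all normal}(v) the action is $a^g=a^s$ for a single integer $s$; because the image of $g$ is a nontrivial $p$-element, $s=1+p^{n-m}u$ with $p\nmid u$.

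Using the $p$-adic valuation $v_p(s^{p^i}-1)=n-m+i$ for odd $p$ (lifting the exponent, in the spirit of Lemma~\ref{p-adic valuation}), I would compute $C_A(g^{p^i})=\Omega_{n-m+i}(A)$, hence $C_G(g^{p^i})=\Omega_{n-m+i}(A)\langle g\rangle$; in particular $Z(G)=\Omega_{n-m}(A)$. The technically crucial step is then to choose the representative $g$ well. A direct computation with the power formula \eqref{power of product} shows that, as $a$ ranges over $A$, the element $(ga)^{p^m}$ ranges over the coset $g^{p^m}A^{p^m}$ (the relevant "norm" has $p$-adic valuation exactly $m$). I would therefore replace $g$ by a representative for which $w:=g^{p^m}$ has minimal possible order $p^{\,j}$. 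Since $w\in\Omega_{n-m}(A)$, every component of $w$ along a cyclic factor of $A$ of maximal order $p^n$ has $p$-adic valuation $\ge m$ and can be cleared, so $w$ lies off the "tall" factors and $\langle w\rangle\cap A^{p^{n-1}}=1$.

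With this set-up the subgroups $\langle g^{p^i}\rangle$ with $0\le i\le m-1$ are non-normal: each meets $A$ in $\langle w\rangle$, while $[A,g^{p^i}]=A^{p^{n-m+i}}\supseteq A^{p^{n-1}}\ne1$ is not contained in the cyclic group $\langle w\rangle$. A direct count gives $|C_G(g^{p^i}):\langle g^{p^i}\rangle|=|\Omega_{n-m+i}(A)|\,p^{\,i-j}$, so $\mci^*(G)=p^k$ yields, for $i=0$ and $i=m-1$,
\[
|\Omega_{n-m}(A)|\le p^{\,k+j}
\qquad\text{and}\qquad
|\Omega_{n-1}(A)|\le p^{\,k+j-m+1}.
\]
Writing $e$ for the number of cyclic factors of $A$ of maximal order $p^n$, so that $|A:\Omega_{n-1}(A)|=|A^{p^{n-1}}|=p^e$, the second inequality becomes $|G|=|A|\,p^m\le p^{\,k+j+e+1}$. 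It therefore suffices to prove $j+e\le k+1$.

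This last inequality is where I expect the real work to lie. The $e$ tall factors force $|\Omega_{n-m}(A)|\ge p^{\,e(n-m)}$, while the minimal-order choice of $w$ exhibits a further, distinct factor of $A$ contributing at least $p^{\,j}$; hence $|\Omega_{n-m}(A)|\ge p^{\,e(n-m)+j}$, and comparison with the first displayed inequality gives $e(n-m)\le k$. Since $j\le n-m$, an elementary estimate then yields $e+j\le e(n-m)+1\le k+1$, and $|G|\le p^{2k+2}$ follows. The main obstacle is precisely this bookkeeping: one must prevent the non-split defect $j$ from being counted twice against the rank of $A$, and it is the careful choice of $g$ (so that $g^{p^m}$ avoids the maximal-order factors) that decouples the two contributions and makes the estimate sharp, in agreement with the extremal groups of order $p^{2k+2}$.
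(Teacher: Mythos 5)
Your proof is correct, and its overall architecture is the same as the paper's: establish $C_G(A)=A$, deduce from Proposition \ref{when all normal} that $G/A$ is cyclic of order $p^m$ acting by a universal power $s=1+p^{n-m}u$, split $A=B\times C$ with $B$ homocyclic of exponent $p^n$, adjust the generator so that its $p^m$-th power avoids $B$, and then feed non-normal cyclic subgroups generated by powers of the adjusted generator into the bound $\mci^*(G)=p^k$. Where you genuinely diverge is in how the good representative is produced and how the counting is organized. The paper obtains $h\in gA$ with $h^{p^t}\in\Omega_{n-t}(C)$ by invoking Lemma \ref{power=1} in the quotient $G/\Omega_{n-t}(C)$, a lemma whose proof rests on the collection formula (\ref{power of product}) and Kummer's theorem (Lemma \ref{p-adic valuation}); you instead note that $(ga)^{p^m}=g^{p^m}a^N$ with $N=1+s+\dots+s^{p^m-1}$ of $p$-adic valuation exactly $m$ (lifting the exponent, $p$ odd), so these powers sweep the whole coset $g^{p^m}A^{p^m}$ and the $B$-component can be cleared outright. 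This is more elementary and in fact slightly stronger, exploiting precisely the feature of this situation (action by a universal power) that makes the general lemma unnecessary. A minor presentational point: the minimality of $o(w)$ is never used in your argument; all that matters is that the $B$-component of $w$ is cleared, so that $w\in\Omega_{n-m}(C)$, which already yields both $|\Omega_{n-m}(C)|\ge p^{j}$ and $j\le n-m$. The final bookkeeping also differs in form: you apply the $\mci^*$ bound to the two subgroups $\langle g\rangle$ and $\langle g^{p^{m-1}}\rangle$ and track the parameter $j$ explicitly, whereas the paper uses the single subgroup $\langle h^{p^{t-1}}\rangle$ and splits the index $|\Omega_{n-1}(A):\Omega_{n-1}(A)\cap\langle h\rangle|$ as $|B^p|\,|C:C\cap\langle h\rangle|$; both versions reduce to the same product-versus-sum estimate (the paper's $r(n-1)\ge r+n-2$, your $(e-1)(n-m-1)\ge 0$) and both deliver the sharp bound $|G|\le p^{2k+2}$.
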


\begin{proof}
Let $A$ be a maximal abelian normal subgroup all of whose subgroups are normal in $G$, and let $p^n$ be the exponent of $A$.
By Proposition \ref{when all normal}, the quotient group $G/A$ embeds in $\UU(\Z/p^n\Z)$, which is cyclic of order $p^{n-1}(p-1)$.
Thus $G/A$ is cyclic of order $p^t$ for some $t\le n-1$.
This implies in particular that $n\ge 2$, since $G$ is non-abelian.
Since $p>2$, we can choose a generator $gA$ of $G/A$ such that $a^g=a^{1+p^{n-t}}$ for every
$a\in A$.
As a consequence, $C_A(g)=\Omega_{n-t}(A)$ and $C_A(g^{p^{t-1}})=\Omega_{n-1}(A)$.
In particular, we have $g^{p^t}\in\Omega_{n-t}(A)$.

Now write $A=B\times C$, where $B$ is homocyclic of exponent $p^n$, and $\exp C\le p^{n-1}$.
Thus $|B|=p^{rn}$ for some $r\ge 1$.
Also, we have $\Omega_{n-t}(A)=B^{p^t}\times \Omega_{n-t}(C)$, and so
$g^{p^t}=b^{p^t}c$, with $b\in B$ and $c\in \Omega_{n-t}(C)$.
By applying Lemma \ref{power=1} to the quotient $G/\Omega_{n-t}(C)$, with
$\langle b \rangle \Omega_{n-t}(C)/\Omega_{n-t}(C)$ playing the role of $K$, there exists
$h\in gA$ such that $h^{p^t}\in \Omega_{n-t}(C)$.
Note that $h$ plays the same role as $g$, in the sense that $hA$ is a generator of
$G/A$, and $C_A(h^{p^{t-1}})=\Omega_{n-1}(A)$.

Since $o(hA)=p^t$, we have $A\cap \langle h \rangle=\langle h^{p^t} \rangle \le C$.
Hence $B\cap \langle h \rangle=1$.
Since $[h^{p^{t-1}},B]=B^{p^{n-1}}\ne 1$, it follows that $\langle h^{p^{t-1}} \rangle$ is not normal in $G$.
By the condition $\mci^*(G)=p^k$, we have
\begin{equation}
\label{upper bound}
| C_G(h^{p^{t-1}}):\langle h^{p^{t-1}} \rangle | \le p^k.
\end{equation}
Also,
\begin{equation}
\label{lower bound}
\begin{aligned}
| C_G(h^{p^{t-1}}):\langle h^{p^{t-1}} \rangle |
& \ge
| \langle h \rangle \Omega_{n-1}(A):\langle h^{p^{t-1}} \rangle |
\\
& =
| \langle h \rangle:\langle h^{p^{t-1}} \rangle |
\,
|  \Omega_{n-1}(A): \Omega_{n-1}(A) \cap \langle h \rangle |
\\
&=
p^{t-1}
\,
|  \Omega_{n-1}(A): \Omega_{n-1}(A) \cap \langle h \rangle |.
\end{aligned}
\end{equation}
Since $\Omega_{n-1}(A)=B^p \times C$ and $A \cap \langle h \rangle \le C$, we have
\begin{equation}
\label{index}
|  \Omega_{n-1}(A): \Omega_{n-1}(A) \cap \langle h \rangle |
=
|  B^p | \, | C : C \cap \langle h \rangle |.
\end{equation}
This, together with (\ref{upper bound}) and (\ref{lower bound}), yields
\[
| B^p | \le p^{k-t+1}.
\]
Since $|B^p|=p^{r(n-1)}$, it follows that
\[
k-t+1 \ge r(n-1) \ge r+n-2,
\]
by using that both $r$ and $n-1$ are greater than or equal to $1$.
Hence
\begin{equation}
\label{bound r+n}
r+n \le k-t+3.
\end{equation}
On the other hand, since $|  \Omega_{n-1}(A): \Omega_{n-1}(A) \cap \langle h \rangle | \ge
| \Omega_{n-1}(A) | / p^{n-1}$, again by (\ref{upper bound}) and (\ref{lower bound}) we get
\begin{equation}
\label{order omega}
|\Omega_{n-1}(A)| \le p^{k-t+n}.
\end{equation}

Now, since $A$ is abelian, we have $|A| = |A^{p^{n-1}}| \, |\Omega_{n-1}(A)| $.
Hence
\[
|A| = |B^{p^{n-1}}| \, |\Omega_{n-1}(A)| = p^r \,  |\Omega_{n-1}(A)|,
\]
and so, by (\ref{bound r+n}) and (\ref{order omega}),
\[
|A| \le p^{2k-2t+3} \le p^{2k-t+2}.
\]
It follows that
\[
|G| = |G/A| \, |A| = p^t \, |A| \le p^{2k+2},
\]
which completes the proof.
\end{proof}

In order to attack the general case, we need the following result about automorphisms of an abelian group of the form $C_{p^n}\times C_p\times \cdots \times C_p$, where
$n\ge 2$.

\begin{lem}
\label{Aut B}
Let $p$ be an odd prime, and let $B=\langle b_1 \rangle \times B^*$, where
$o(b_1)=p^n\ge p^2$, and $B^*$ is elementary abelian of order $p^m$.
If $Q$ is the subgroup of $\Aut B$ formed by the $p$-automorphisms that act as the identity on $B^*$, then we have $Q = \langle \varphi_1 \rangle \times Q^*$, where
$\varphi_1$ is the automorphism of $B$ defined by the rules
\[
\varphi_1(b_1)=b_1^{1+p},
\quad
\varphi_1(b^*)=b^*,
\
\text{for every $b^*\in B^*$,}
\]
and $Q^*$ is the subgroup of $Q$ formed by the automorphisms which also act as the identity on $B/B^*$.
Also, we have $Q^*\cong B^*$, and thus $Q\cong C_{p^{n-1}}\times C_{p^m}$.
\end{lem}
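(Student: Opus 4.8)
The plan is to describe $Q$ completely by its action on the single generator $b_1$, reduce the whole question to a Sylow analysis inside the group of all automorphisms fixing $B^*$ pointwise, and then read off the direct factors.

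First I would parametrise. Any $\alpha\in Q$ fixes $B^*$ pointwise, hence is determined by the value $\alpha(b_1)$. Writing a general element of $B$ as $b_1^i u$ with $0\le i<p^n$ and $u\in B^*$, the element $b_1^i u$ has order $p^n$ precisely when $p\nmid i$ (here the hypothesis $n\ge 2$ is essential: since $p^{n-1}\ge p$, a $B^*$-contribution of order at most $p$ cannot raise the order to $p^n$ once $p\mid i$), and in that case $\langle b_1^i u,\, B^*\rangle=B$. As $\alpha(b_1)$ must have the order of $b_1$ and generate $B$ together with $\alpha(B^*)=B^*$, I get $\alpha(b_1)=b_1^i u$ with $p\nmid i$, and conversely each such rule defines an automorphism fixing $B^*$. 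Denoting it by $\alpha_{i,u}$, a direct computation yields the composition law $\alpha_{i,u}\alpha_{j,v}=\alpha_{ij,\,u^j v}$, so the group $\widetilde Q$ of \emph{all} automorphisms of $B$ fixing $B^*$ pointwise is isomorphic to the semidirect product $B^*\rtimes(\Z/p^n\Z)^{*}$, with $(\Z/p^n\Z)^{*}$ acting on $B^*$ by the power maps.

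Next I would carry out the Sylow analysis inside $\widetilde Q$, whose order is $p^{m+n-1}(p-1)$. Since $p$ is odd, $(\Z/p^n\Z)^{*}$ is cyclic with Sylow $p$-subgroup $\langle 1+p\rangle$ of order $p^{n-1}$; because $1+p\equiv 1\pmod p$ and $B^*$ is elementary abelian, this $p$-part acts trivially on $B^*$. Hence $S=\langle\varphi_1\rangle\times\{\alpha_{1,u}\mid u\in B^*\}$ is a subgroup of order $p^{n-1}p^{m}=p^{m+n-1}$, i.e. a full Sylow $p$-subgroup (the two factors commute because $u^{1+p}=u$, and they intersect trivially). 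The decisive point is uniqueness: the number of Sylow $p$-subgroups divides $p-1$ and is $\equiv 1\pmod p$, and the only such divisor is $1$. Consequently the set of all $p$-elements of $\widetilde Q$ equals $S$; but that set is by definition exactly $Q$, so $Q=S=\langle\varphi_1\rangle\times\{\alpha_{1,u}\}$.

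Finally I would identify the factors. The automorphism $\alpha_{1+p,1}$ is precisely $\varphi_1$, of order $p^{n-1}$. Each $\alpha_{i,u}$ sends $b_1B^*$ to $b_1^iB^*$ in $B/B^*\cong C_{p^n}$, so it fixes $B/B^*$ exactly when $i\equiv 1\pmod{p^n}$, i.e. exactly for the $\alpha_{1,u}$; this identifies the second factor with the subgroup $Q^*$ of the statement, and $u\mapsto\alpha_{1,u}$ is an isomorphism $B^*\to Q^*$ by the composition law, so $Q^*\cong B^*$ and $Q=\langle\varphi_1\rangle\times Q^*\cong C_{p^{n-1}}\times B^*$, as claimed. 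I expect the only real obstacle to be the middle step, where one must be sure that the $p$-automorphisms fixing $B^*$ genuinely form a subgroup equal to a Sylow $p$-subgroup of $\widetilde Q$; this is exactly what the counting argument $n_p\mid p-1$ and $n_p\equiv 1\pmod p$ secures, and everything else is bookkeeping with $\alpha_{i,u}\alpha_{j,v}=\alpha_{ij,\,u^j v}$.
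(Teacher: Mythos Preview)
Your proof is correct. Both your argument and the paper's rest on the same parametrisation $\psi(b_1)=b_1^i u$ with $p\nmid i$ and $u\in B^*$, and on the fact that for odd $p$ the Sylow $p$-subgroup of $(\Z/p^n\Z)^*$ is $\langle 1+p\rangle$. The difference is one of packaging: the paper argues directly that, since $\psi$ has $p$-power order, the induced automorphism of $B/B^*\cong C_{p^n}$ also has $p$-power order, forcing $i\in\langle 1+p\rangle$; it then factorises $\psi=\varphi\varphi^*$ with $\varphi\in\langle\varphi_1\rangle$ and $\varphi^*\in Q^*$ in one line. You instead introduce the larger group $\widetilde Q\cong B^*\rtimes(\Z/p^n\Z)^*$ of all automorphisms fixing $B^*$, build an explicit Sylow $p$-subgroup $S=\langle\varphi_1\rangle\times\{\alpha_{1,u}\}$, and use the Sylow count $n_p\mid p-1$, $n_p\equiv 1\pmod p$ to conclude $S$ is unique, hence equals the set of $p$-elements, hence equals $Q$. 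Your route makes it transparent that the $p$-automorphisms really do form a subgroup (which the paper's phrasing tacitly assumes), and your explicit composition law $\alpha_{i,u}\alpha_{j,v}=\alpha_{ij,u^jv}$ cleanly justifies commutativity of the factors and triviality of their intersection; the paper's version is shorter because it skips the passage through $\widetilde Q$ and reads the constraint on $i$ off from the quotient map $Q\to\Aut(B/B^*)$ directly.
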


\begin{proof}
Given $\psi\in Q$, let us write $\psi(b_1)=b_1^i b^*$ with $i\in\Z$ and $b^*\in B^*$.
Observe that $i$ is not divisible by $p$, since $o(\psi(b_1))=p^n$ and $n\ge 2$.
Since $\psi$ is a $p$-automorphism and $p$ is odd, $i$ must be a power of $1+p$ modulo
$p^n$.
Then we have $\psi=\varphi\varphi^*=\varphi^*\varphi$, where $\varphi$ and $\varphi^*$ are the automorphisms in $Q$ sending $b_1$ to $b_1^i$ and to $b_1b^*$, respectively.
Since $\varphi$ is a power of $\varphi_1$, it follows that
$Q=\langle \varphi_1 \rangle \times Q^*$.
Finally, since every $b^*\in B^*$ induces the automorphism $b_1\mapsto b_1b^*$, we have $Q^*\cong B^*$.
\end{proof}

Now we complete the proof of Theorem A.
Recall that, for $p$ an odd prime, a finite $p$-group $G$ is called \emph{$p$-central\/} if
$\Omega_1(G)\le Z(G)$.

\begin{thm}
Let $p$ be an odd prime, and let $G$ be a non-abelian finite $p$-group such that
$\mci^*(G)=p^k$.
Then $|G|\le p^ {2k+2}$.
\end{thm}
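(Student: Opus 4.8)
The plan is to choose a maximal abelian normal subgroup $A$ of $G$ and to reduce to Proposition \ref{all normal} whenever possible, handling the one remaining configuration by a direct centralizer count. Recall that for such $A$ we have $C_G(A)=A$, so that $Z(G)\le A$ and $G/A$ embeds in $\Aut A$; put $p^n=\exp A$. If every subgroup of $A$ is normal in $G$, then Proposition \ref{all normal} already gives $|G|\le p^{2k+2}$, so I may assume this fails. By Proposition \ref{when all normal} there is then a decomposition of $A$ with a non-normal cyclic direct factor $\langle a\rangle$, say $A=\langle a\rangle\times A'$; since $A\le C_G(a)$ and $\langle a\rangle$ is not normal, the hypothesis $\mci^*(G)=p^k$ forces $|A'|=|A:\langle a\rangle|\le p^k$.

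First I would dispose of the case $\Omega_1(A)\not\le Z(G)$. Choosing $x\in\Omega_1(A)\smallsetminus Z(G)$, the subgroup $\langle x\rangle$ has order $p$ and is non-central, hence non-normal (a normal subgroup of order $p$ in a $p$-group is central). As $A\le C_G(x)$, this gives $|A|\le p^{k+1}$. Every conjugate of $x$ lies in $\Omega_1(A)$, so $|\Cl_G(x)|\le|\Omega_1(A)|-1<p^{k+1}$; being a power of $p$, we get $|G:C_G(x)|\le p^{k}$, and therefore $|G|=|G:C_G(x)|\,|C_G(x)|\le p^{k}\cdot p^{k+1}=p^{2k+1}$.

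The hard case is $\Omega_1(A)\le Z(G)$, where every element of $A$ of order $p$ is central; now non-normal cyclic subgroups of $A$ have order at least $p^2$, and the crude count above is too weak. I expect this to be the main obstacle. I would split it according to whether $G$ is $p$-central. If some $y\in G\smallsetminus A$ has order $p$, then $\langle y\rangle$ is again non-normal (as $y\notin A=C_G(A)$ forces $y\notin Z(G)$), and $Z(G)\le C_G(y)$ yields $|Z(G)|\le p^{k+1}$; since $\Omega_1(Z(G))=\Omega_1(A)$, this bounds the rank of $A$ by $k+1$, after which an analysis of $C_A(y)$ and of the image of the map $a\mapsto[a,y]$ would control $\exp A$ and $|G/A|$. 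Otherwise $\Omega_1(G)=\Omega_1(A)\le Z(G)$, so $G$ is $p$-central; here the action of $G$ on $A$ is trivial on $\Omega_1(A)$ but need not be by uniform powers, and this is the genuinely delicate configuration.

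In that last configuration I would apply Lemma \ref{Aut B} to a section $B=\langle b_1\rangle\times B^*$ of $A$ with $o(b_1)=p^n$ and $B^*$ elementary abelian, so as to pin down the possible actions of $G/A$ and thereby bound simultaneously $\exp A$, the rank of $A$, and $|G/A|$ in terms of $k$. With this structural information in hand, the concluding estimate would be analogous to the proof of Proposition \ref{all normal}: using Lemma \ref{power=1} I would replace a generator by an element $h$ whose relevant power $h^{p^{t-1}}$ generates a non-normal cyclic subgroup with centralizer containing $\langle h\rangle\Omega_{n-1}(A)$, and the two inequalities coming from $\mci^*(G)=p^k$ would then combine, as before, to give $|G|\le p^{2k+2}$. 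The crux, and the reason Lemma \ref{Aut B} is needed, is precisely that the faithful action of $G/A$ on $A$ can a priori be very large, so that only the centralizer restriction, together with the rigid description of the automorphisms trivial on $\Omega_1(A)$, keeps $|G|$ bounded.
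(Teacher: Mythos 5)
Your opening moves are sound: reducing via Proposition \ref{all normal} to the case where some subgroup of the maximal abelian normal subgroup $A$ is non-normal is exactly the paper's first step, and your disposal of the subcase $\Omega_1(A)\not\le Z(G)$ is correct and complete --- there $\langle x\rangle$ is non-normal of order $p$, the hypothesis gives $|A|\le p^{k+1}$, all conjugates of $x$ lie in $\Omega_1(A)\smallsetminus\{1\}$, so $|G:C_G(x)|\le p^k$ and $|G|\le p^{2k+1}$. That count is a nice self-contained argument which does not appear in the paper in this form.

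Everything after that, however, is a plan rather than a proof, and the plan breaks precisely where you say the difficulty lies. Keeping the \emph{maximal} abelian normal subgroup $A$ and applying Lemma \ref{Aut B} to a section $B=\langle b_1\rangle\times B^*$ of $A$ cannot bound $|G/A|$: Lemma \ref{Aut B} concerns only groups of the special shape $C_{p^n}\times C_p\times\cdots\times C_p$, which $A$ need not have; worse, the natural candidates for $B$ (built from the non-normal direct factor $\langle a\rangle$) are not normal in $G$, and even for a normal section the map $G\to\Aut B$ may have a huge kernel, so the faithfulness you have on $A$ itself --- where $\Aut A$ is intractable --- is of no use on $B$. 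The paper's solution is to \emph{abandon maximality}: it chooses $B$ minimal (first in exponent, then in order) among abelian normal subgroups containing a subgroup not normal in $G$. That minimality forces $B=\langle b_1\rangle\times B^*$ with $B^*$ elementary abelian and \emph{central}, gives $p$-centrality of $G$ via Alperin's theorem, and, crucially, yields $C_G(B)=C_G(b_1)$, whose order is at most $p^{k+n}$ \emph{directly from the hypothesis} because $\langle b_1\rangle$ is non-normal; only then does Lemma \ref{Aut B}, applied to the embedding $G/C_G(B)\hookrightarrow Q$, bound the relevant index. Your sketch also omits the ingredients that make the final sharp count work: the inequality $|\Omega_1(G)|\ge |G:G^p|\ge p^{s+1}$ for $p$-central groups (Gonz\'alez-S\'anchez--Weigel, the paper's Claim 2), and the construction --- via Blackburn's theorem on $R(G)$, Lemma \ref{cyclic avoiding R(G)} and Lemma \ref{power=1} --- of an element $g$ of order $p^t$ modulo $C_G(B)$ with $\langle g\rangle\cap\langle b_1\rangle=1$ and $\langle g^{p^{t-1}}\rangle$ non-normal (Claim 4); the concluding estimate is \emph{not} a rerun of Proposition \ref{all normal}, because the uniform-power action used there is exactly what is unavailable now. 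Finally, in your non-$p$-central subcase the assertion that analysing $C_A(y)$ and $a\mapsto [a,y]$ ``controls $\exp A$'' is the hard point, not a routine step: for $p=2$ the analogous statement is false (in the family $\FF_1$ with $b^2=1$ one has $y=b$ of order $2$ outside $A$, $C_A(y)=\Omega_1(A)$ bounded, yet $\exp A$ unbounded), so any correct argument must use $p>2$ in an essential way, and none is given.
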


\begin{proof}
According to Proposition \ref{all normal}, we may assume that there exists at least a subgroup $H$ of $G$ satisfying the following condition (C): $H$ is an abelian normal subgroup of $G$ containing a subgroup which is not normal in $G$.

Among all subgroups of $G$ satisfying (C), we choose one, $B$, which is minimal in the following sense:
\begin{enumerate}
\item
If $H$ satisfies (C) then $\exp B\le \exp H$.
\item
If $H$ satisfies (C) and $\exp B=\exp H$, then $|B|\le |H|$.
\end{enumerate}

Let $p^n$ be the exponent of $B$, and let $b_1\in B$ be such that
$\langle b_1 \rangle \not\trianglelefteq G$.
Since $\mci^*(G)=p^k$, we have
\begin{equation}
\label{centralizer b1}
|C_G(b_1)| = |C_G(b_1):\langle b_1 \rangle| \, |\langle b_1 \rangle| \le p^{k+n}.
\end{equation}
If $n=1$ then $|C_G(b_1)|\le p^{k+1}$, and in particular $|B|\le p^{k+1}$.
Since $|G:C_G(b_1)|=|\Cl_G(b_1)|\le |B|$, it follows that $|G|\le p^{2k+2}$, and we are done.
Hence we assume that $n\ge 2$ in the remainder of the proof.

\vspace{8pt}

\noindent
\textit{Claim 1}.
$G$ is a $p$-central group.

\vspace{8pt}

Let $W$ be a maximal elementary abelian normal subgroup of $G$.
Since $\exp B\ge p^2$, it follows from the choice of $B$ that
$\langle w \rangle \trianglelefteq G$ for every $w\in W$.
Hence $W\le Z(G)$.
On the other hand, by a well-known theorem of Alperin
\cite[Chapter III, Theorem 12.1]{hup}, we have $\Omega_1(C_G(W))=W$, since $p>2$.
Hence $\Omega_1(G)=W$ and, in particular, $\Omega_1(G)\le Z(G)$.
Thus $G$ is $p$-central, as claimed.

\vspace{8pt}

Let us continue analysing the structure of $G$.
Since $\exp B^p<\exp B$, we have $\langle b_1^p \rangle \trianglelefteq G$ by the choice of $B$.
Then
\[
[b_1,g]^p = [b_1^p,g] \in \langle b_1^p \rangle,
\]
for every $g\in G$, and consequently $[b_1,g]\in \langle b_1 \rangle \Omega_1(B)$.
Hence $\langle b_1 \rangle \Omega_1(B)$ is normal in $G$, and again by the minimality of $B$, we have $B=\langle b_1 \rangle \Omega_1(B)$.
Thus $B=B_1 \times B^*$, where $B_1=\langle b_1 \rangle$, and $B^*$ is elementary abelian (and so central in $G$).
It follows that $C_G(B)=C_G(b_1)$, and so
\begin{equation}
\label{bound C_G(B)}
|C_G(B)| \le p^{k+n},
\end{equation}
by (\ref{centralizer b1}).
We write $C$ for $C_G(B)$ in the remainder of the proof.

It also follows from the previous paragraph that $o(b_1)=p^n$.
If we put $|B^*|=p^{r-1}$ then $B$ is as in the statement of Lemma \ref{Aut B}, with $r-1$ playing the role of $m$.
Also,
\begin{equation}
\label{order B}
|B| = p^{n+r-1}.
\end{equation}
Let $Q$ be the subgroup of $\Aut B$ which was defined in Lemma \ref{Aut B}.
Since every $g\in G$ induces a $p$-automorphism of $B$ which acts as the identity on $B^*$, there is an embedding $\Phi \colon G/C \longrightarrow Q$.
By Lemma \ref{Aut B}, we have
\[
G/C
\cong C_{p^t}\times C_p\times \overset{s-1}{\cdots} \times C_p
\]
for some $t\le n-1$ and $s\le r$.
In particular, $G/C$ is abelian, $\exp G/C=p^t$, and
\begin{equation}
\label{order G/C}
|G:C| = p^{t+s-1}.
\end{equation}
In the following, we let $g_1$ denote an element whose image $\overline{g_1}$ in $G/C$ has order $p^t$.
Then $\overline{g_1}$ need not correspond to a power of $\varphi_1$ under $\Phi$, but if
$t\ge 2$ then $\Phi(\overline{g_1})=\varphi_1^i\varphi^*$ for some $\varphi^*\in Q^*$, and for some $i$ which is divisible by $p^{n-t-1}$ but not by $p^{n-t}$.
Now $\varphi_1^i$ generates the same subgroup of $Q$ as the automorphism sending $b_1$ to $b_1^{1+p^{n-t}}$, and $\varphi^*$ acts trivially on $B^p$.
Hence we get
\begin{equation}
\label{non-trivial comm in B_1}
[b_1,g_1^{p^{t-1}}], [b_1^{p^{t-1}},g_1] \in B_1^{p^{n-1}} \smallsetminus 1,
\ \
[b_1^{p^{t-1}},g_1^p]=1,
\quad
\text{for $t\ge 2$.}
\end{equation}
We will need these facts later on.
It also follows that
\begin{equation}
\label{cent g^{p^{t-1}}}
C_G(g_1^{p^{t-1}})=B^p\Omega_1(B),
\end{equation}
an equality that holds for every value of $t\ge 1$.

\vspace{8pt}

\noindent
\textit{Claim 2}.
$|\Omega_1(G)|\ge p^{s+1}$ and $\Omega_1(G)\le C$.

\vspace{8pt}

Since $G$ is $p$-central, we have $|\Omega_1(G)|\ge |G:G^p|$ by
\cite[Theorem C]{gon-wei}.
Now we consider two cases, according as $C\le G^p$ or not.
If $C\le G^p$ then, since $G/C$ is abelian, it follows in particular that $G'\le G^p$, and $G$ is a powerful $p$-group.
Since $b_1\in G^p$ in this case, we may write $b_1=g^{p^i}$ for some
$g\in G\smallsetminus G^p$.
But then $g\in C_G(b_1)=C\le G^p$, which is a contradiction.
Thus we have $|C:C\cap G^p|\ge p$, and
\[
|G:G^p| = |G:G^pC| \, |G^pC:G^p| = |G/C:(G/C)^p| \, |C:C\cap G^p| \ge p^{s+1}.
\]
We conclude that $|\Omega_1(G)|\ge p^{s+1}$.
Observe also that $\Omega_1(G)\le C$, since $\Omega_1(G)$ is contained in $Z(G)$.

\vspace{8pt}

\noindent
\textit{Claim 3}.
The theorem is proved if there exists $h\in C$ such that $\langle h \rangle \cap B_1=1$,
$\langle h \rangle \not\trianglelefteq G$, and $[h,y]=1$ for some $y\in G$ whose order modulo $C$ is $p^{t-1}$.

\vspace{8pt}

For such an element $h$, we have
$\langle h \rangle \cap B^p=\langle h \rangle \cap B_1^p=1$,
and consequently the order of $\langle h \rangle \cap B$ is at most $p$.
Hence
\[
|C_C(h):\langle h \rangle| \ge |B:\langle h \rangle \cap B| \ge |B|/p.
\]
Since $\mci^*(G)=p^k$, we have
\[
p^k \ge |C_G(h):\langle h \rangle| = |C_G(h):C_C(h)| \, |C_C(h):\langle h \rangle|
\ge |C_G(h)C:C| \, |B|/p.
\]
Now, since
\[
|C_G(h)C:C| \ge |\langle y \rangle C:C| \ge p^{t-1},
\]
it follows that
\[
|B| \le p^{k-t+2}.
\]
By (\ref{order B}), we have $n+r-1\le k-t+2$.
Then we conclude from (\ref{bound C_G(B)}) and (\ref{order G/C}) that
\[
|G| = |G:C| \, |C| \le p^{t+r-1} \, p^{k+n} \le p^{2k+2},
\]
as desired.

\vspace{8pt}

Observe that, in the case that $t=1$, the existence of the element $y$ in Claim 3 is straightforward, by taking $y=1$.
So in that case we only have to worry about finding an appropriate $h\in C$.

\vspace{8pt}

\noindent
\textit{Claim 4}.
We may assume that there exists en element $g\in G$ of order $p^t$ modulo $C$, such that $\langle g \rangle \cap B_1=1$ and $\langle g^{p^{t-1}} \rangle \not\trianglelefteq G$.

\vspace{8pt}

For this purpose, we consider separately the cases $t=1$ and $t\ge 2$.
Suppose first that $t=1$.
As shown above, the theorem holds if there exists $h\in C$ such that
$\langle h \rangle \cap B_1=1$ and $\langle h \rangle \not\trianglelefteq G$.
Thus we may assume that $\langle h \rangle \cap B_1 \ne 1$ whenever $h\in C$ and
$\langle h \rangle \not\trianglelefteq G$.
Now, let us choose a subgroup $J$ of $G$ such that $|J:C|=p$.
Then $J$ is not abelian, since $B\le J$ and $J\not\le C$.
Since $p>2$, it follows that $R(J)=1$ by \cite[Theorem 1]{bla}, as already mentioned.
By Lemma \ref{cyclic avoiding R(G)}, there exists $g\in J$ such that
$\langle g \rangle \cap B_1=1$ and $\langle g \rangle \not\trianglelefteq G$.
Since $g$ cannot belong to $C$, it follows that $o(\overline g)=p$ in $G/C$, as desired.

Now we deal with the case that $t\ge 2$.
It suffices to find an element $g\in g_1C$ such that $\langle g\rangle \cap B_1=1$.
Indeed, in that case we have $o(\overline g)=o(\overline g_1)=p^t$ in $G/C$, and also
$\langle g^{p^{t-1}} \rangle \not\trianglelefteq G$, since
$[b_1,g^{p^{t-1}}]=[b_1,g_1^{p^{t-1}}] \in B_1 \smallsetminus 1$ by
(\ref{non-trivial comm in B_1}).

Hence we are done if the intersection $D=\langle g_1 \rangle \cap  B_1$ is trivial, and so we assume that $D\ne 1$.
Let $p^{\ell}$ and $p^m$ be the orders of $b_1$ and $g_1$ modulo $D$.
Observe that $\ell\ge t$, since $b_1^{p^{\ell}}$ commutes with $g_1$ and
$[b_1^{p^{t-1}},g_1]\ne 1$ by (\ref{non-trivial comm in B_1}).
We also have $m\ge t$, since $g_1^{p^{t-1}}\not\in C$ and $D\subseteq C$.
Suppose first that $\ell\ge m$, so that $g_1^{p^m}\in B_1^{p^m}$.
By applying Lemma \ref{power=1} in $G/B^*$, with $B/B^*$ playing the role of $K$, it follows that there exists $g\in g_1B$ such that  $\langle g \rangle \cap B\subseteq B^*$.
Consequently $\langle g \rangle \cap B_1=1$, we are done in this case.
Assume now that $\ell<m$.
Put $y_1=b_1^{p^{t-1}}$ and $x_1=g_1^{p^{m-\ell+t-1}}$.
Then both $y_1$ and $x_1$ have order $p^{\ell-t+1}$ modulo $D$, and $x_1\in C$, since $m-\ell+t-1\ge t$.
Then there exists $h\in y_1\langle x_1 \rangle$ such that $\langle h \rangle \cap B_1=1$, either by Lemma \ref{power=1}, or even simpler, because $\langle x_1,y_1 \rangle$ is abelian.
Observe that $\langle h \rangle$ is not normal in $G$, since
\[
[h,g_1]=[b_1^{p^{t-1}},g_1]\in B_1^{p^{n-1}}\smallsetminus 1
\]
by (\ref{non-trivial comm in B_1}).
On the other hand, we have $h\in C$ and $[h,g_1^p]=[b_1^{p^{t-1}},g_1^p]=1$, again
by (\ref{non-trivial comm in B_1}).
Thus $h$ fulfills all conditions of Claim 3, which imply that $|G|\le p^{2k+2}$.
This proves Claim 4 for $t\ge 2$.

\vspace{8pt}

Finally, we use the element $g$ of Claim 4 in order to complete the proof of the theorem.
First of all, since $\langle  g^{p^{t-1}} \rangle \not\trianglelefteq G$, we have
\begin{equation}
\label{good decomposition}
\begin{split}
p^k
&\ge
|C_G(g^{p^{t-1}}):\langle g^{p^{t-1}} \rangle|
\\
&=
|C_G(g^{p^{t-1}}):\langle g^{p^{t-1}} \rangle C_C(g^{p^{t-1}}) | \,
|\langle g^{p^{t-1}} \rangle C_C(g^{p^{t-1}}):\langle g^{p^{t-1}} \rangle|
\end{split}
\end{equation}
Observe that
\begin{equation}
\label{first factor}
|C_G(g^{p^{t-1}}):\langle g^{p^{t-1}} \rangle C_C(g^{p^{t-1}}) |
\ge
|\langle g \rangle C:\langle g^{p^{t-1}} \rangle C|
=
p^{t-1},
\end{equation}
and that
\begin{equation}
\label{second factor 1}
\begin{split}
|\langle g^{p^{t-1}} \rangle C_C(g^{p^{t-1}}):
&\langle g^{p^{t-1}} \rangle|
=
|C_C(g^{p^{t-1}}):C_C(g^{p^{t-1}}) \cap \langle g^{p^{t-1}} \rangle|
\\
&\ge
|C_{B\Omega_1(G)}(g^{p^{t-1}}):C_{B\Omega_1(G)}(g^{p^{t-1}}) \cap
\langle g^{p^{t-1}} \rangle|,
\end{split}
\end{equation}
since $B\Omega_1(G)\le C$.
Now, since $\Omega_1(G)\le Z(G)$, we have
\[
C_{B\Omega_1(G)}(g^{p^{t-1}})
=
C_B(g^{p^{t-1}}) \Omega_1(G)
=
B^p\Omega_1(G),
\]
by using (\ref{cent g^{p^{t-1}}}).
(Recall that the element $g_1$ in (\ref{cent g^{p^{t-1}}}) is an arbitrary element whose image in $G/C$ has order $p^t$.)
Hence
\begin{equation}
\label{second factor 2}
|C_{B\Omega_1(G)}(g^{p^{t-1}})| \ge p^{n+s-1},
\end{equation}
by using that $|\Omega_1(G)|\ge p^{s+1}$, as proved in Claim 2.
On the other hand,
\[
C_{B\Omega_1(G)}(g^{p^{t-1}}) \cap \langle g^{p^{t-1}} \rangle
=
B^p\Omega_1(G) \cap \langle g^{p^{t-1}} \rangle
\]
is a subgroup of $\Omega_1(G)$, since
\[
(B\Omega_1(G) \cap \langle g^{p^{t-1}} \rangle)^p
\subseteq
(B\Omega_1(G))^p \cap \langle g^{p^t} \rangle
\subseteq
B_1 \cap \langle g \rangle
=
1.
\]
Thus
\[
|C_{B\Omega_1(G)}(g^{p^{t-1}}) \cap \langle g^{p^{t-1}} \rangle|
\le
p,
\]
and consequently, by (\ref{second factor 1}) and (\ref{second factor 2}),
\[
|\langle g^{p^{t-1}} \rangle C_C(g^{p^{t-1}}):\langle g^{p^{t-1}} \rangle|
\ge
p^{n+s-2}.
\]
It then follows from (\ref{good decomposition}) and (\ref{first factor}) that
$k\ge n+s+t-3$.
Hence
\[
|G| = |G:C| \, |C| \le p^{s+t-1} \, p^{k+n} = p^{k+n+s+t-1} \le p^{2k+2},
\]
as desired.
\end{proof}

The following example shows that the bound $|G|\le p^{2k+2}$ in Theorem A is best possible.

\begin{exa}
\label{bound best possible}
Let $p$ be an arbitrary prime, and let $k$ be a positive integer.
Consider the group $G$ given by the following presentation:
\[
G = \langle a,b \mid a^{p^{k+1}}=b^{\,p^{k+1}}=1,\ a^b=a^{1+p^k} \rangle.
\]
Then $Z(G)=\langle a^p,b^{\,p} \rangle$ and $o(g)=p^{k+1}$ for every
$g\in G\smallsetminus Z(G)$.
By using these two facts, one can readily check that $\mni(G)=\mni^*(G)=\mci^*(G)=p^k$.
\end{exa}

\section{The even prime}

In this section we study finite $2$-groups with a given value of $\mni(G)$, $\mni^*(G)$, or
$\mci^*(G)$.
As indicated in the introduction, in this case one cannot bound the order of the group $G$, and this is due to the existence of two infinite families $\FF_1$ and $\FF_2$ in which the group order can grow arbitrarily while $\mni(G)$, $\mni^*(G)$, and $\mci^*(G)$ remain bounded.
We begin by calculating the values of these invariants for the groups in $\FF_1$ and
$\FF_2$.
We need the following straightforward lemma.

\begin{lem}
\label{g^2}
Let $G$ be a finite $2$-group in one of the families $\FF_1$ or $\FF_2$, and let
$g\in G\smallsetminus A$.
Then:
\begin{enumerate}
\item
If $G$ lies in $\FF_1$, then $g^2=b^2$ if $s=-1$, and
$g^2\equiv b^2 \pmod{A^{2^{n-1}}}$ if $s=-1+2^{n-1}$.
\item
If $G$ lies in $\FF_2$, then $g^2=b^2$ or $b^2z$ if $s=-1$, and
$g^2\equiv b^2$ or $b^2z \pmod{A^{2^{n-1}}}$ if $s=-1+2^{n-1}$.
Both possibilities $b^2$ and $b^2z$ always occur.
\end{enumerate}
In every case, we have $g^2\in\Omega_1(A)$, and so $o(g)=2$ or $4$.
\end{lem}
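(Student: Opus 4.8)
The key observation is that $A$ has index at most $2$ in $G$: since $b^2\in\Omega_1(A)\le A$, the coset $bA$ squares to the trivial coset in $G/A$, so $|G:A|\le 2$, and any $g\in G\smallsetminus A$ can be written as $g=ba$ with $a\in A$. Using the convention $a^b=b^{-1}ab$ (so that $ab=ba^b$), I would then compute
\[
g^2=(ba)(ba)=b(ab)a=b^2\,a^b a,
\]
which reduces the whole lemma to evaluating the single product $a^ba$ in each family and for each admissible value of $s$; everything else is substitution.

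For $\FF_1$ we have $a^b=a^s$ for all $a\in A$, so $a^ba=a^{s+1}$. When $s=-1$ this is $1$ and $g^2=b^2$; when $s=-1+2^{n-1}$ it equals $a^{2^{n-1}}\in A^{2^{n-1}}$, giving $g^2\equiv b^2\pmod{A^{2^{n-1}}}$. For $\FF_2$ I would write $a=a_1^{\,j}a^*$ with $a^*\in A^*$ and use that conjugation by $b$ is an automorphism of the abelian group $A$; the rules $a_1^b=a_1^sz$ and $(a^*)^b=(a^*)^s$ yield $a^b=a_1^{sj}z^j(a^*)^s$, whence
\[
a^ba=a_1^{(s+1)j}\,z^j\,(a^*)^{s+1}.
\]
If $s=-1$ the powers of $a_1$ and $a^*$ vanish and $a^ba=z^j$, so $g^2=b^2z^j$; if $s=-1+2^{n-1}$ then $(s+1)j=2^{n-1}j$ and $(a^*)^{s+1}=(a^*)^{2^{n-1}}$ both lie in $A^{2^{n-1}}$, so $a^ba\equiv z^j$ and $g^2\equiv b^2z^j\pmod{A^{2^{n-1}}}$. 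Since $z$ has order $2$, $z^j$ equals $1$ or $z$ according to the parity of $j$, which gives precisely the two stated values $b^2$ and $b^2z$; both are realised, for instance by $g=b$ (where $j=0$) and $g=ba_1$ (where $j=1$).

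Finally, for the order assertion I would check that $g^2\in\Omega_1(A)$, i.e.\ that $(g^2)^2=1$, by examining the factors that occur. We have $b^2\in\Omega_1(A)$ by definition of the families, $z\in\Omega_1(A^*)\le\Omega_1(A)$, and $a^{2^{n-1}}$, $a_1^{2^{n-1}j}$ square to $1$ because $\exp A=2^n$ and $o(a_1)=2^n$. The only remaining factor, $(a^*)^{2^{n-1}}$, occurs solely in the case $\FF_2$ with $s=-1+2^{n-1}$, and there the standing hypothesis $n\ge m$ gives $\exp A^*\le 2^m\le 2^n$, so $\bigl((a^*)^{2^{n-1}}\bigr)^2=(a^*)^{2^n}=1$. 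Hence $g^4=1$ in every case, and since $g\ne 1$ we conclude $o(g)\in\{2,4\}$.

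The computation is routine throughout, so there is no genuine obstacle; the only place that demands a little care is the case $\FF_2$ with $s=-1+2^{n-1}$, where one must keep the surviving factor $z^j$ separate from the terms $a_1^{2^{n-1}j}$ and $(a^*)^{2^{n-1}}$ that get absorbed into $A^{2^{n-1}}$, and where the hypothesis $n\ge m$ is exactly what is needed to place $(a^*)^{2^{n-1}}$ inside $\Omega_1(A)$.
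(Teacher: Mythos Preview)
Your argument is correct and is precisely the routine computation the paper has in mind; the paper itself labels the lemma ``straightforward'' and gives no proof, so your write-up simply fills in the omitted details in the expected way.
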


\begin{thm}
\label{values for F1}
Let $G$ be a non-Dedekind $2$-group in the family $\FF_1$.
If $A$ is of rank $r$, then
\begin{equation}
\label{mci* for F1}
\mci^*(G)
=
\begin{cases}
2^r,
&
\text{if $G\smallsetminus A$ contains an element of order $2$,}
\\
2^{r-1},
&
\text{otherwise,}
\end{cases}
\end{equation}
and
\begin{equation}
\label{mni* for F1}
\mni(G)
=
\mni^*(G)
=
\begin{cases}
2^r,
&
\text{if $b^2\in A^2$,}
\\
2^{r-1},
&
\text{if $b^2\not\in A^2$.}
\end{cases}
\end{equation}
\end{thm}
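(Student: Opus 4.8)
The plan is to reduce everything to an analysis of the cyclic subgroups $\langle g\rangle$ with $g\in G\smallsetminus A$, and then to compute the relevant indices in terms of the rank $r$ of $A$ and the position of $b^2$ relative to $A^2$.

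First I would record the structural facts that feed both computations. Since $b$ acts on $A$ by the uniform rule $a\mapsto a^s$, condition (v) of Proposition \ref{when all normal} holds (with $s(g)\in\{1,s\}$ according as $g\in A$ or $g\in bA$), so every subgroup of $A$ is normal in $G$; hence every non-normal cyclic subgroup of $G$ is generated by some $g\in G\smallsetminus A$. For such $g$, Lemma \ref{g^2} gives $o(g)\in\{2,4\}$ with $g^2\in\Omega_1(A)$. Writing $g=ba$ with $a\in A$, a direct computation gives $g^2=b^2a^{s+1}$ and $x^g=x^s$ for all $x\in A$, whence $C_A(g)=C_A(b)=\Omega_1(A)=Z(G)$ (here $n\ge 2$, since $G$ is non-Dedekind). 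A second short computation shows that $bc\in bA$ commutes with $g$ exactly when $ca^{-1}\in\Omega_1(A)$, so $C_G(g)=\langle g\rangle\,\Omega_1(A)$ has order $2^{r+1}$ and $|C_G(g):\langle g\rangle|=2^{r+1}/o(g)$. Finally, since $gA=bA$, any $h\in G\smallsetminus A$ can be written $h=ge$ with $e\in A$, and then $g^h=g^e$; thus the conjugation action of $G$ on $\langle g\rangle$ coincides with that of $A$. From these facts $\mci^*(G)$ follows at once: an involution $g\in G\smallsetminus A$ lies outside $Z(G)$, so $\langle g\rangle$ is non-normal and contributes $2^r$, while an order-$4$ element contributes $2^{r-1}$; as these exhaust the non-normal cyclic subgroups and at least one exists, we get (\ref{mci* for F1}).

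For $\mni(G)=\mni^*(G)$ (the equality being Proposition \ref{mni=mni*}) I would treat the two orders separately. For $o(g)=2$ we have $N_G(\langle g\rangle)=C_G(g)$, giving index $2^r$. For $o(g)=4$, the quotient $N_G(\langle g\rangle)/C_G(g)$ embeds in $\Aut C_4\cong C_2$, so $|N_G(\langle g\rangle):\langle g\rangle|$ equals $2^r$ if $g$ is inverted in $G$ and $2^{r-1}$ otherwise. By the last remark above, $g$ is inverted in $G$ iff some $c\in A$ inverts it, and from $g^c=g\,c^{1-s}$ this happens iff $g^2\in A^{1-s}$. The key point is that $A^{1-s}=A^2$: for $s=-1$ this is immediate, while for $s=-1+2^{n-1}$ one writes $c^{1-s}=c^{\,2+2^{n-1}}=(c^{1+2^{n-2}})^2$ and observes that $c\mapsto c^{1+2^{n-2}}$ is an automorphism of $A$ (as $1+2^{n-2}$ is odd for $n\ge 3$), so its image of squares is all of $A^2$. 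Combining this with $g^2=b^2a^{s+1}$ and $a^{s+1}\in A^2$ yields $g^2\in A^2\iff b^2\in A^2$, independently of the chosen $g$.

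It then remains to assemble the contributions. If $b^2\in A^2$, every order-$4$ element outside $A$ is inverted (contribution $2^r$) and every involution outside $A$ also contributes $2^r$; since $G$ is non-Dedekind some non-normal $\langle g\rangle$ exists, so $\mni^*(G)=2^r$. If $b^2\notin A^2$, then by Lemma \ref{g^2} there are no involutions in $G\smallsetminus A$ (one would force $b^2\in A^{2^{n-1}}\subseteq A^2$), so every non-normal cyclic subgroup has order $4$ and is not inverted, giving $\mni^*(G)=2^{r-1}$; this is (\ref{mni* for F1}). The main obstacle is precisely the identity $A^{1-s}=A^2$ for $s=-1+2^{n-1}$, as it is what forces the inversion condition to depend only on whether $b^2\in A^2$ rather than on the particular $g$; the hypotheses $n\ge 3$ and $g^2\equiv b^2\pmod{A^{2^{n-1}}}$ are exactly what make it work, and throughout one must carefully distinguish $g$ being inverted from $\langle g\rangle$ being normal.
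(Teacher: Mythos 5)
Your proof is correct. The $\mci^*$ half is essentially the paper's own argument: both rest on Proposition \ref{when all normal} (all subgroups of $A$ are normal in $G$) and on the computation $C_G(g)=\langle g\rangle\,\Omega_1(A)$ for $g\in G\smallsetminus A$, giving index $2^r$ or $2^{r-1}$ according as $o(g)=2$ or $4$. For the $\mni^*$ half, however, your route is genuinely different. The paper passes to the quotient $G/N$ with $N=\langle g^2\rangle$: since $gN$ has order $2$, the normalizer of $\langle gN\rangle$ is the centralizer of $gN$, and since $G/N$ is again a group in $\FF_1$ (possibly a Dedekind one, which is why the paper stresses that its centralizer formula holds for \emph{every} member of $\FF_1$), the already-established formula yields $|N_G(\langle g\rangle):\langle g\rangle|=2^{d(A/N)}$ in one stroke. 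You instead stay inside $G$: you bound $N_G(\langle g\rangle)/C_G(g)\hookrightarrow \Aut C_4\cong C_2$, note that conjugation of $g$ by elements of $G$ coincides with conjugation by $A$, and from $g^c=g\,c^{1-s}$ reduce the question to whether $g^2\in A^{1-s}$; your identity $A^{1-s}=A^2$ (immediate for $s=-1$, and for $s=-1+2^{n-1}$ via $c^{1-s}=(c^{1+2^{n-2}})^2$ with $1+2^{n-2}$ odd, $n\ge 3$) then makes the criterion $g^2\in A^2$, which by Lemma \ref{g^2} depends only on $b^2$. The paper's quotient trick is shorter and scales directly to the family $\FF_2$, where Theorem \ref{values for F2} reuses it verbatim (there $G/N$ may land in $\FF_1$ or $\FF_2$); your argument is more self-contained and exhibits the mechanism explicitly, namely exactly which automorphisms of $\langle g\rangle$ are realized by conjugation, at the price of the modular-arithmetic identity. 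A further small difference: the paper proves up front that \emph{every} $g\in G\smallsetminus A$ generates a non-normal subgroup (by contradiction against the Dedekind hypothesis), whereas you get by with less, using only that an involution outside $A$ avoids $Z(G)=\Omega_1(A)$ (so generates a non-normal subgroup) together with the existence of some non-normal cyclic subgroup because $G$ is non-Dedekind; since in each of your cases all the eligible subgroups give the same index, this weaker input does suffice.
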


\begin{proof}
First of all, observe that all subgroups of $A$ are normal in $G$.
On the other hand, we claim that $\langle g \rangle \not\trianglelefteq G$ for every
$g\in G\smallsetminus A$.
Otherwise, $[g,A]$ is contained in $\langle g^2 \rangle$, which is either trivial or of order $2$ by Lemma \ref{g^2}.
Since $a^g=a^{-1}$ or $a^g=a^{-1+2^{n-1}}$ for every $a\in A$, it follows that
either $A\cong C_2\times \cdots \times C_2$, or
$A\cong C_4\times C_2\times \cdots \times C_2$ and $g^2\in A^2\smallsetminus 1$.
In any case, $G$ is a Dedekind group, which is a contradiction.

We begin by calculating $\mci^*(G)$.
For every $g\in G\smallsetminus A$, we have
\[
C_G(g) = \langle g \rangle C_A(g) = \langle g \rangle C_A(b) =
\langle g \rangle \Omega_1(A),
\]
and so
\begin{equation}
\label{|C_G(g):<g>| for F1}
|C_G(g):\langle g \rangle| = |\Omega_1(A):\Omega_1(A)\cap \langle g \rangle|
=
\begin{cases}
2^r,
&
\text{if $o(g)=2$,}
\\
2^{r-1},
&
\text{if $o(g)=4$.}
\end{cases}
\end{equation}
Observe that this equality holds for every group $G$ in $\FF_1$, not only for non-Dedekind groups.
Now, if $G$ is not a Dedekind group, then according to the previous paragraph, every
$g\in G\smallsetminus A$ generates a non-normal subgroup of $G$.
Consequently, $\mci^*(G)$ is as given in (\ref{mci* for F1}).

Let us now obtain $\mni^*(G)$, which by Proposition \ref{mni=mni*} coincides with
$\mni(G)$.
Let  again $g$ be an arbitrary element of $G\smallsetminus A$, and put
$N=\langle g^2 \rangle$, which is a normal subgroup of $G$.
Then
\[
|N_G(\langle g \rangle):\langle g \rangle|
=
|N_{G/N}(\langle gN \rangle):\langle gN \rangle|
=
|C_{G/N}(gN):\langle gN \rangle|,
\]
since $gN$ has order $2$ in $G/N$.
By applying (\ref{|C_G(g):<g>| for F1}), which is valid for every group in $\FF_1$, to the group $G/N$, we get
\[
|N_G(\langle g \rangle):\langle g \rangle|
=
2^{d(A/N)}
=
\begin{cases}
2^r,
&
\text{if $g^2\in A^2$,}
\\
2^{r-1},
&
\text{if $g^2\not\in A^2$.}
\end{cases}
\]
Now, by (i) of Lemma \ref{g^2}, we have $g^2\in A^2$ or $g^2\not\in A^2$ simultaneously for every $g\in G\smallsetminus A$, according as $b^2\in A^2$ or not.
This proves (\ref{mni* for F1}).
\end{proof}

\begin{rmk}
If $G\in\FF_1$ and $b^2\not\in A^2$, then $\langle b^2 \rangle$ is a direct factor of $A$,
(for this, we need to use that $b^2$ is of order $2$).
Then $G$ can be given as a semidirect product $G=\langle b \rangle \ltimes \widetilde A$, where $d(\widetilde A)=r-1$ and $\widetilde a^b=\widetilde a^s$ for every
$\widetilde a\in \widetilde A$.
Thus the groups of this kind are a generalisation of the groups given in
(\ref{k=1 1st infinite family}) and (\ref{k=1 2nd infinite family}), which were infinite families of $2$-groups satisfying $\mni(G)=2$.
\end{rmk}

\begin{thm}
\label{values for F2}
Let $G$ be a $2$-group in the family $\FF_2$.
If the order of $A^*$ is $2^m$, then
\begin{equation}
\label{mci* for F2}
\mci^*(G)
=
\begin{cases}
2^{m+1},
&
\parbox[t]{.55\textwidth}
{if $A^*$ is elementary abelian, and $G\smallsetminus A$ contains an element of order $2$,}
\\[15pt]
2^m,
&
\text{otherwise,}
\end{cases}
\end{equation}
and, if $n\ge 3$,
\begin{equation}
\label{mni* for F2}
\mni(G)
=
\mni^*(G)
=
\begin{cases}
2^{m+1},
&
\parbox[t]{.45\textwidth}
{if $A^*$ is elementary abelian, and $b^2\in A^2$ or $b^2z\in A^2$,}
\\[15pt]
2^m,
&
\text{otherwise.}
\end{cases}
\end{equation}
\end{thm}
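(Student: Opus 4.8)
The plan is to determine all non-normal cyclic subgroups of $G$, split them into those lying inside $A$ and those generated by an element of $G\smallsetminus A$, compute the appropriate index for each type, and take the maximum. For the subgroups inside $A$, I would write a typical element as $a_1^i a^*$ with $a^*\in A^*$ and observe that $(a_1^i a^*)^b = a_1^{is}z^i(a^*)^s$; comparing the $\langle a_1\rangle$- and $A^*$-components (which are independent, since $A=\langle a_1\rangle\times A^*$) shows that this is a power of $a_1^i a^*$ exactly when $z^i=1$, i.e. when $i$ is even, because $z$ has order $2$. Thus the non-normal cyclic subgroups contained in $A$ are precisely the $\langle a_1^i a^*\rangle$ with $i$ odd; each such generator has order $2^n$, lies outside $\Omega_1(A)$, and satisfies $C_G(a_1^i a^*)=N_G(\langle a_1^i a^*\rangle)=A$. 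Since $|A|=2^{n+m}$, every one of them contributes the index $|A:\langle a_1^i a^*\rangle|=2^m$ to \emph{both} $\mci^*(G)$ and $\mni^*(G)$. This is the ``baseline'' $2^m$ that appears in every case of the statement.

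Next I would handle the elements $g\in G\smallsetminus A$. By Lemma~\ref{g^2} we have $g^2\in\Omega_1(A)$ and $o(g)\in\{2,4\}$, and exactly as in the proof of Theorem~\ref{values for F1} one gets $C_G(g)=\langle g\rangle C_A(b)=\langle g\rangle\Omega_1(A)$, whence
\[
|C_G(g):\langle g\rangle|=|\Omega_1(A):\Omega_1(A)\cap\langle g\rangle|=
\begin{cases}
|\Omega_1(A)|, & o(g)=2,\\
|\Omega_1(A)|/2, & o(g)=4.
\end{cases}
\]
The decisive numerical fact is that $|\Omega_1(A)|=2\,|\Omega_1(A^*)|$ equals $2^{m+1}$ precisely when $A^*$ is elementary abelian, and is at most $2^m$ otherwise. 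An element of order $2$ outside $A$ always generates a non-normal subgroup (it is not central, as $Z(G)=\Omega_1(A)\subseteq A$). Combining this with the baseline $2^m$ from the subgroups inside $A$ yields the formula for $\mci^*(G)$ at once: the value $2^{m+1}$ is attained iff $A^*$ is elementary abelian \emph{and} $G\smallsetminus A$ contains an involution, and in all remaining cases the contributions from outside $A$ are at most $2^m$, so the baseline $2^m$ dominates.

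For $\mni(G)=\mni^*(G)$ (equal by Proposition~\ref{mni=mni*}) I would treat the outside elements through the quotient $\bar G=G/N$ with $N=\langle g^2\rangle\trianglelefteq G$. Since $\bar g$ then has order $2$, its normalizer and centralizer in $\bar G$ coincide, and
\[
|N_G(\langle g\rangle):\langle g\rangle|=|N_{\bar G}(\langle\bar g\rangle):\langle\bar g\rangle|=|C_{\bar A}(\bar b)|=|\Omega_1(A)|\cdot\frac{|N\cap\imm(\sigma-1)|}{|N|},
\]
where $\sigma$ denotes conjugation by $b$ and $\imm(\sigma-1)=[A,b]$ (using $\ker(\sigma-1)=C_A(b)=\Omega_1(A)$). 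Hence $g$ attains the maximal possible value $|\Omega_1(A)|$ exactly when $g^2\in[A,b]$ (this automatically covers the case $o(g)=2$, where $g^2=1$), and gives $|\Omega_1(A)|/2$ otherwise. The hypothesis $n\ge 3$ enters precisely here to guarantee that $[A,b]$ contains an element of order $\ge 4$, so that every order-$4$ element outside $A$ does generate a non-normal subgroup and thus genuinely contributes to $\mni^*$.

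The main obstacle is the final arithmetic translation. By Lemma~\ref{g^2} the values of $g^2$ over $g\in G\smallsetminus A$ are exactly $b^2$ and $b^2z$ (up to $A^{2^{n-1}}$ when $s=-1+2^{n-1}$), both occurring, so the maximum $|\Omega_1(A)|$ is realized iff $b^2\in[A,b]$ or $b^2z\in[A,b]$; the delicate point is to show this is equivalent to the stated condition $b^2\in A^2$ or $b^2z\in A^2$. For this I would compute $[A,b]=\langle a_1^{s-1}z\rangle\,(A^*)^{s-1}$ explicitly and compare it with $A^2=\langle a_1^2\rangle\,(A^*)^2$. The two subgroups differ by the $z$-twist in the $a_1$-generator and need not coincide, but for an element of $\Omega_1(A)$ whose $\langle a_1\rangle$-part lies in $\langle a_1^{2^{n-1}}\rangle$ one checks—separately for $s=-1$ and $s=-1+2^{n-1}$, and using $z^{2^{n-2}}=1$, which holds because $n\ge 3$—that membership in $[A,b]$ and membership in $A^2$ are governed by the \emph{same} condition on the $A^*$-component; the case $s=-1+2^{n-1}$ requires carrying along the $A^{2^{n-1}}$ correction but gives the same conclusion. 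Granting this equivalence, and recalling that the subgroups inside $A$ always supply $2^m$ while $|\Omega_1(A)|=2^{m+1}$ iff $A^*$ is elementary abelian, the formula for $\mni(G)=\mni^*(G)$ follows exactly as in the $\mci^*$ case.
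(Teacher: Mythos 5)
Your proposal is correct, and its skeleton coincides with the paper's: split the non-normal cyclic subgroups into those inside $A$ (the baseline $2^m$, attained by $\langle a_1\rangle$, with every non-normal cyclic subgroup of $A$ having index at most $2^m$ since its generator must involve an odd power of $a_1$) and those generated by $g\in G\smallsetminus A$; compute $C_G(g)=\langle g\rangle\Omega_1(A)$ for the latter; and reduce normalizers to centralizers in $\bar G=G/\langle g^2\rangle$. The one step where you genuinely diverge is the normalizer computation for outside elements. The paper observes that $\bar G$ is again a group in $\FF_1$ or $\FF_2$, so the centralizer formula already established applies verbatim and gives $|N_G(\langle g\rangle):\langle g\rangle|=2^{d(A/\langle g^2\rangle)}$; since $d(A/\langle g^2\rangle)=r$ exactly when $g^2\in\Phi(A)=A^2$, the criterion $g^2\in A^2$ (hence, via Lemma~\ref{g^2}, the stated condition on $b^2$ and $b^2z$) falls out with no extra work. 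You instead compute $|C_{\bar A}(\bar b)|$ from $\ker(\sigma-1)=\Omega_1(A)$ and $\imm(\sigma-1)=[A,b]$, which produces the criterion $g^2\in[A,b]$ and obliges you to prove separately that, on elements of order at most $2$, membership in $[A,b]$ agrees with membership in $A^2$ when $n\ge 3$. That equivalence is true, and your sketch is the right one: solving the $\langle a_1\rangle$-component forces an even exponent on the generator $a_1^{s-1}z$ of the cyclic part of $[A,b]$ once $n\ge 3$, which kills the $z$-twist (and this is exactly what fails for $n=2$, consistent with the paper's remark after the theorem). So your route costs an extra reconciliation step that the paper's ``stay inside the family'' trick avoids, but in exchange it makes the structural role of the hypothesis $n\ge 3$ more transparent. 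One small caveat: your claim that the non-normal cyclic subgroups of $A$ are \emph{precisely} the $\langle a_1^i a^*\rangle$ with $i$ odd tacitly assumes $\exp A^*\le 2^n$ (automatic for $s=-1+2^{n-1}$ because $m\le n$, but not formally imposed when $s=-1$); fortunately all your argument uses is the valid implication that non-normality forces $i$ odd, hence generator order at least $2^n$ and index at most $2^m$.
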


\begin{proof}
Let us first obtain the value of $\mci^*(G)$.
Let $g$ be an arbitrary element of $G\smallsetminus A$.
Since $C_A(b)=\Omega_1(A)$, we can argue as in the proof of
Theorem \ref{values for F1} to get
\begin{equation}
\label{|C_G(g):<g>| for F2}
|C_G(g):\langle g \rangle|
=
\begin{cases}
2^r,
&
\text{if $o(g)=2$,}
\\
2^{r-1},
&
\text{if $o(g)=4$,}
\end{cases}
\end{equation}
where $r$ is the rank of the abelian group $A$.
On the other hand, if $g\in A$ and $\langle g \rangle \not\trianglelefteq G$ (there is at least one such element, namely $a_1$) then necessarily $o(g)=2^n$.
Since $C_G(g)=A$, it follows that
\[
|C_G(g):\langle g \rangle| = 2^m.
\]
Now, since $r\le m+1$, with equality if and only if $A^*$ is elementary abelian, it readily follows that $\mci^*(G)$ is as in (\ref{mci* for F2}): simply observe that, if there exists
$g\in G\smallsetminus A$ of order $2$, then $\langle g \rangle$ is not normal in $G$.

Let us now calculate $\mni^*(G)$, under the assumption that $n\ge 3$.
In this case, every $g\in G\smallsetminus A$ generates a non-normal subgroup of $G$, since $o(g)\le 4$ and $[g,G]$ contains $a_1^{s-1}z$, which is of order $2^{n-1}$.
Put $N=\langle g^2 \rangle$, which is a normal subgroup of $G$.
Then, as in the proof of Theorem \ref{values for F1}, we have
\[
|N_G(\langle g \rangle):\langle g \rangle|
=
|C_{G/N}(gN):\langle gN \rangle|.
\]
Now observe that $G/N$ is a group either in $\FF_1$ or in $\FF_2$ (depending on where $N$ is located inside $A$).
Thus by applying (\ref{|C_G(g):<g>| for F1}) or (\ref{|C_G(g):<g>| for F2}), it follows that
\[
|N_G(\langle g \rangle):\langle g \rangle|
=
2^{d(A/N)}
=
\begin{cases}
2^r,
&
\text{if $g^2\in A^2$,}
\\
2^{r-1},
&
\text{if $g^2\not\in A^2$.}
\end{cases}
\]
On the other hand, if $g\in A$ and $\langle g \rangle \not\trianglelefteq G$ then
$o(g)=2^n$, and
\begin{equation}
\label{normalizer for g in A}
|N_G(\langle g \rangle):\langle g \rangle|=2^m.
\end{equation}
Consequently,
\[
\mni^*(G)
=
\begin{cases}
2^{m+1},
&
\parbox[t]{.6\textwidth}
{if $A^*$ is elementary abelian, and there exists $g\in G\smallsetminus A$ such that
$g^2\in A^2$,}
\\[15pt]
2^m,
& \text{otherwise.}
\end{cases}
\]
Then (\ref{mni* for F2}) follows from here, since by Lemma \ref{g^2}, $g^2$ is congruent to $b^2$ or $b^2z$ modulo $A^2$ for every $g\in G\smallsetminus A$, and both cases occur.
\end{proof}

\begin{rmk}
Formula (\ref{mni* for F2}) is not valid for $n=2$.
Let us consider the group
\[
G = \langle a_1,a_2,b
\mid a_1^4=a_2^2=1,\ b^2=a_1^2,\ a_1^b=a_1^{-1}a_2,
\ [a_2,b]=[a_1,a_2]=1 \rangle,
\]
of order $16$.
Then $G$ belongs to $\FF_2$, with $n=2$, $m=1$, $s=-1$, and $z=a_2$.
Since every element $g\in G\smallsetminus A$ is of order $4$ by Lemma \ref{g^2}, it follows that $|N_G(\langle g \rangle):\langle g \rangle|=2$ if
furthermore $\langle g \rangle \not\trianglelefteq G$.
This, together with (\ref{normalizer for g in A}), proves that $\mni^*(G)=2$, which does not match the value given by (\ref{mni* for F2}), which is $4$ in this case.
\end{rmk}

\begin{rmk}
Observe that the value of $\mci^*(G)$ for both families $\FF_1$ and $\FF_2$ depends on the existence of an element of order $2$ in the difference $G\smallsetminus A$.
One can easily describe when such an element exists in terms of the defining parameters of the groups in question.
More precisely, if $G$ belongs to $\FF_1$ then $G\smallsetminus A$ contains an element of order $2$ if and only if either $s=-1$ and $b^2=1$, or $s=-1+2^{n-1}$ and $b^2\in A^{2^{n-1}}$.
On the other hand, if $G$ belongs to $\FF_2$ then there is an element of order $2$ in
$G\smallsetminus A$ if and only if either $s=-1$ and $b^2=1$ or $z$, or $s=-1+2^{n-1}$ and $b^2\in (A^*)^{2^{n-1}}$ or $b^2\in a_1^{2^{n-1}} z(A^*)^{2^{n-1}}$.
\end{rmk}

We conclude by proving Theorem B.
The following lemma will be needed.

\begin{lem}
\label{mci* in quotients}
Let $G$ be a finite group, and let $N$ be a normal subgroup of $G$ for which $G/N$ is not a Dedekind group.
Then $\mci^*(G/N)\le |N|\mci^*(G)$.
\end{lem}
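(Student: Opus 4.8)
The plan is to bound, for each $\bar g=gN$ with $\langle\bar g\rangle\not\trianglelefteq G/N$, the index $|C_{G/N}(\bar g):\langle\bar g\rangle|$ by $|N|\,\mci^*(G)$; taking the maximum over all such $\bar g$ then yields the lemma, since $G/N$ is non-Dedekind and hence such elements exist. The first step is a translation back to $G$. Writing $L=\{x\in G:[x,g]\in N\}$, I note that $L$ is the full preimage in $G$ of $C_{G/N}(\bar g)$, so $C_{G/N}(\bar g)=L/N$, while $\langle\bar g\rangle=\langle g\rangle N/N$. Consequently
\[
|C_{G/N}(\bar g):\langle\bar g\rangle|=|L:\langle g\rangle N|.
\]
I would also record that $\langle g\rangle$ is not normal in $G$: if it were, then $\langle g\rangle N$ would be normal in $G$ and its image $\langle\bar g\rangle$ normal in $G/N$, contrary to assumption. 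In particular $G$ itself is non-Dedekind, $\mci^*(G)$ is defined, and $|C_G(g):\langle g\rangle|\le\mci^*(G)$.

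Next I would factor the index along the chain $\langle g\rangle N\le C_G(g)N\le L$, writing
\[
|L:\langle g\rangle N|=|L:C_G(g)N|\cdot|C_G(g)N:\langle g\rangle N|.
\]
The second factor is at most $|C_G(g):\langle g\rangle|$, hence at most $\mci^*(G)$: a direct count (or the second isomorphism theorem) gives $|C_G(g)N:\langle g\rangle N|=|C_G(g):\langle g\rangle|\cdot|\langle g\rangle\cap N|/|C_G(g)\cap N|$, and the inclusion $\langle g\rangle\cap N\le C_G(g)\cap N$ makes the trailing quotient at most $1$. It then remains to show $|L:C_G(g)N|\le|N|$, for which it suffices to prove the cleaner bound $|L:C_G(g)|\le|N|$.

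This last bound is the heart of the argument. I would consider the map $f\colon L\to N$ defined by $f(x)=[x,g]$, which lands in $N$ precisely by the definition of $L$. It is not a homomorphism, but a short computation yields the cocycle identity $f(xy)=f(x)^y f(y)$ for $x,y\in L$. From this identity one sees that $f(x)=f(y)$ holds if and only if $f(yx^{-1})=1$, that is, if and only if $yx^{-1}\in C_G(g)$; thus the fibres of $f$ are exactly the right cosets of $C_G(g)$ in $L$. Hence $f$ induces an injection of the coset space $C_G(g)\backslash L$ into $N$, giving $|L:C_G(g)|\le|N|$ as required. Combining the three estimates yields $|C_{G/N}(\bar g):\langle\bar g\rangle|\le|N|\,\mci^*(G)$.

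The step I expect to be the main obstacle is exactly this last one. Because the commutator map is not a homomorphism, one cannot read off the index $|L:C_G(g)|$ from a naive kernel–image count; it is the cocycle identity $f(xy)=f(x)^y f(y)$, together with the resulting identification of the fibres of $f$ with cosets of $C_G(g)$, that makes the injection into $N$ possible and supplies the factor $|N|$.
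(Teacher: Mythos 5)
Your proof is correct. It is worth comparing it with the paper's own proof, which is only two lines long: for any $g\in G$, (a) $|C_{G/N}(gN)|\le |C_G(g)|$, justified ``by looking at the conjugacy classes'' (the class of $gN$ in $G/N$ is the image of $\Cl_G(g)$, hence has at least $|\Cl_G(g)|/|N|$ elements, and orbit--stabilizer converts this into the centralizer bound), and (b) $|\langle gN\rangle|\ge |\langle g\rangle|/|N|$; dividing (a) by (b) gives the claim for every $g$ whose image generates a non-normal subgroup. Your two estimates are in substance equivalent to these, but you prove them rather than invoke them, and you organize the bookkeeping differently: instead of bounding the orders of $C_{G/N}(gN)$ and $\langle gN\rangle$ separately and dividing, you pull everything back to $G$, pass through the chain $\langle g\rangle N\le C_G(g)N\le L$, and split the index into two factors. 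Your key bound $|L:C_G(g)|\le |N|$ is literally statement (a), since $|C_{G/N}(gN)|=|L|/|N|$, and your cocycle map $x\mapsto[x,g]$ is a translate of the conjugation map $x\mapsto g^x$, so the identification of its fibres with cosets of $C_G(g)$ is the same orbit--stabilizer fact that underlies the paper's conjugacy-class remark; similarly, your computation $|C_G(g)N:\langle g\rangle N|=|C_G(g):\langle g\rangle|\cdot|\langle g\rangle\cap N|/|C_G(g)\cap N|\le|C_G(g):\langle g\rangle|$ plays the role of (b). What your route buys is a self-contained, purely index-theoretic argument carried out inside $G$, with the factor $|N|$ visibly produced by the injection of $C_G(g)\backslash L$ into $N$; what the paper's route buys is brevity, at the cost of leaving the class-counting step and the final division to the reader. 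Both proofs handle the hypotheses identically: non-normality of $\langle gN\rangle$ pulls back to non-normality of $\langle g\rangle$, so $\mci^*(G)$ is defined and applicable.
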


\begin{proof}
If $g\in G$ then $|C_{G/N}(gN)|\le |C_G(g)|$ (this can be seen by looking at the conjugacy classes), and $|\langle gN \rangle|\ge |\langle g \rangle|/|N|$.
The result follows.
\end{proof}

Observe that the inequality $\mci^*(G/N)\le \mci^*(G)$ need not hold in general.
For example, if $G\cong Q_{2^n}$ with $n\ge 4$, then $\mci^*(G)=1$ but
$\mci^*(G/Z(G))=2$.

\begin{thm}
Let $G$ be a non-Dedekind finite $2$-group, and suppose that either
$\mni(G)=2^k$, $\mni^*(G)=2^k$, or $\mci^*(G)=2^k$.
Then there exists a polynomial function $f(k)$ of degree four such that,
if $|G|>2^{f(k)}$, then $G$ belongs to one of the families $\FF_1$ or $\FF_2$.
\end{thm}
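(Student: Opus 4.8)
The plan is to prove the statement under the single weakest hypothesis $\mci^*(G)\le 2^k$, since by (\ref{relation invariants}) and Proposition \ref{mni=mni*} each of the three assumptions $\mni(G)=2^k$, $\mni^*(G)=2^k$, $\mci^*(G)=2^k$ implies $\mci^*(G)\le 2^k$. I would first dispose of the case $R(G)\ne 1$ using Blackburn's theorem \cite[Theorem 1]{bla}: a non-Dedekind group of type (R3) is a $Q$-group and hence already lies in $\FF_1$, while for types (R1) and (R2) one tests $\mci^*$ on a cyclic subgroup of the shape $\langle i\varepsilon\rangle$, with $i$ generating an order-$4$ factor of a $Q_8$ direct factor and $\varepsilon$ ranging over the central elementary abelian part $E$; this shows that $\mci^*(G)$ grows linearly with $\operatorname{rank}E$, so that rank, and therefore $|G|$, is bounded by a function of $k$. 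From here on one assumes $R(G)=1$.

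The core is a dichotomy governed by a maximal abelian normal subgroup $A$ of $G$, which is self-centralizing, so $G/A$ embeds in $\Aut A$; write $2^n=\exp A$ and $r=\operatorname{rank}A$. If $A$ contains a non-normal cyclic subgroup $\langle a\rangle$, then $\langle a\rangle\le A\le C_G(a)$ gives $|A:\langle a\rangle|\le |C_G(a):\langle a\rangle|\le 2^k$; taking $\langle a\rangle$ inside a maximal cyclic factor yields a decomposition $A=\langle a_1\rangle\times A^*$ with $o(a_1)=2^n$ and $|A^*|\le 2^k$, which is exactly the template of $\FF_2$. In the complementary case every cyclic subgroup of $A$ is normal, so by Proposition \ref{when all normal}(v) each $g\in G$ acts on $A$ as a power map $a\mapsto a^{s(g)}$, which is the template of $\FF_1$. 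In both situations applying the centralizer bound to a non-normal $\langle g\rangle$ with $g\in G\smallsetminus A$ (such a $g$ exists as $G$ is non-Dedekind, using Lemma \ref{cyclic avoiding R(G)}) controls the fixed subgroup $C_A(g)=\Omega_{v_2(s(g)-1)}(A)$ and bounds $r$, and hence $|Z(G)|=|\Omega_1(A)|$, linearly in $k$.

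Next I would leverage these bounds to pin down the action. Once $|G:A|=2$ the induced power on the $2^n$-cyclic part satisfies $s^2\equiv 1\pmod{2^n}$, so $s\in\{1,-1,1+2^{n-1},-1+2^{n-1}\}$; the two near-identity values are eliminated because a large fixed subgroup $\Omega_{v_2(s-1)}(A)$, intersected with the cyclic $\langle g\rangle$ in order at most a power of $o(g)$, would make $|C_G(g):\langle g\rangle|$ exceed $2^k$ for $n$ large. This forces $s\in\{-1,-1+2^{n-1}\}$, $b^2\in C_A(b)=\Omega_1(A)$, and confines the residual discrepancy of $b$ from a pure power map, in the $\FF_2$ case, to the single factor $\langle a_1\rangle$, giving $a_1^b=a_1^sz$ with $z\in\Omega_1(A^*)\smallsetminus 1$; verifying the constraints ($n\ge 3$ when $s=-1+2^{n-1}$, $n\ge m$, etc.) then places $G$ precisely in $\FF_1$ or $\FF_2$. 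The accumulated estimates — rank $O(k)$, the index $|G:A|$ bounded by $2^{O(k^2)}$, and the threshold on $n$ needed to make the centralizer contradictions bite — combine into the degree-four polynomial $f(k)$.

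The main obstacle I expect is twofold: establishing $|G:A|=2$, and isolating the single twist $z$ in the $\FF_2$ case. Because inversion is an involution, two elements of $G\smallsetminus A$ acting by near-inversion have a product acting near-trivially on a subgroup of $A$ of index bounded in $k$, so $G/A$ embeds in a bounded extension of $\langle -1\rangle$; but upgrading this bound to exact index $2$, and ruling out actions that are inversion-like on a large part of $A$ yet genuinely twisted on more than one factor (showing the bounded defect is always absorbable into $A^*$ or forces $|G|\le 2^{f(k)}$), is considerably more delicate than the rank estimates. Here Lemma \ref{mci* in quotients} is the natural tool: one may pass to a quotient $G/N$ by a central involution, where the bound degrades only to $2^{k+1}$, reduce the defect, and reconstruct $G$ as a central extension of a group already known to lie in $\FF_1$ or $\FF_2$ (with Lemma \ref{mci^*(G)=1} handling the degenerate generalised quaternion exception along the way).
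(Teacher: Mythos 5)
Your outline reproduces the paper's architecture faithfully: reduce to $\mci^*(G)\le 2^k$, fix a maximal abelian normal (hence self-centralizing) subgroup $A$, split according to whether all subgroups of $A$ are normal in $G$, bound the rank of $A$ by testing the centralizer condition against $\Omega_1(A)$, and aim for $|G:A|=2$ with the action being $a\mapsto a^{-1}$ or $a\mapsto a^{-1+2^{n-1}}$. But the two steps that carry the real weight are not proved, and the methods you sketch for them break down. The first is the elimination of the action $a\mapsto a^{1+2^{n-1}}$, which is what ultimately yields $|G:A|=2$. You propose to apply $|C_G(g):\langle g\rangle|\le 2^k$ directly to an element $g$ acting near-trivially, but this requires $\langle g\rangle$ to be \emph{non-normal}, which can fail: if $g$ acts as $1+2^{n-1}$ then $g^a=ga^{2^{n-1}}$ for $a\in A$, so $\langle g\rangle$ is normalized by $A$ (and possibly by all of $G$) whenever $A^{2^{n-1}}\le\langle g\rangle$, e.g.\ when $A^{2^{n-1}}$ has order $2$ and coincides with $\Omega_1(\langle g \rangle)$; in that situation the hypothesis $\mci^*(G)=2^k$ gives no contradiction. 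This is precisely where the paper has to work hardest: it forms $J=\langle a_1,g_1\rangle$, invokes Blackburn's classification to show $R(J)=1$ (ruling out types (R1)--(R3) one by one), uses Lemma \ref{cyclic avoiding R(G)} to produce a non-normal cyclic $H\le J$ with $H\cap\langle a_1\rangle=1$, and only then gets $2^k\ge|\langle a_1^2\rangle|$; after that it still needs the arithmetic fact that $\overline{-1}$ and $\overline{-1+2^{n-1}}$ are non-squares in $\UU(\Z/2^n\Z)$ to force $g^2\in A$ for \emph{every} $g$, which is what pins $|G:A|=2$. Your fallback --- quotient by central involutions, apply Lemma \ref{mci* in quotients}, and ``reconstruct $G$ as a central extension of a group in $\FF_1$ or $\FF_2$'' --- does not close this: a central extension of a group in $\FF_1$ need not lie in $\FF_1\cup\FF_2$, so the reconstruction step is unjustified, and each application of Lemma \ref{mci* in quotients} multiplies the bound by $|N|$, so iterating it gives no uniform control. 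You yourself flag this step as an unresolved obstacle, which is an accurate self-assessment.

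The second gap is the case where $A$ has a non-normal subgroup (the $\FF_2$ case). Your sketch jumps from $A=\langle a_1\rangle\times A^*$ with $|A^*|\le 2^k$ straight to ``$a_1^b=a_1^sz$ with $z\in\Omega_1(A^*)\smallsetminus 1$'', but everything in between is missing and is not routine. One must first show $A^*$ is normal in $G$ --- the paper does this by proving that every cyclic subgroup of $A$ of order at most $2^{n-k}$ is normal and then invoking Proposition \ref{when all normal} --- then pass to $G/A^*$ (a single, controlled use of Lemma \ref{mci* in quotients}), prove that the subgroup $C$ defined by $C/A^*=C_{G/A^*}(A/A^*)$ is Dedekind, hence abelian, hence equal to $A$, so that the already-settled first case applies to $G/A^*$; and finally pull the structure back to $G$, where the computation $a_1=a_1^{b^2}=a_1z^{-1}z^b$ is what shows $z$ is central of order $2$, i.e.\ $z\in\Omega_1(A^*)$. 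Also, in this case the elements of $G$ do not act on $A$ by power maps, so your formula $C_A(g)=\Omega_{v_2(s(g)-1)}(A)$ and your appeal to a non-normal $\langle g\rangle$ with $g\in G\smallsetminus A$ are not available as stated. (A further small slip: for type (R1) your test subgroup $\langle i\varepsilon\rangle$ with $\varepsilon\in E$ is in fact normal, since $(i\varepsilon)^j=(i\varepsilon)^{-1}$; one must mix in the generator of the $C_4$ factor.) In short, the proposal is a correct high-level outline of the paper's strategy, but it has genuine gaps at exactly the two points where the proof is genuinely hard.
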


\begin{proof}
It suffices to prove the result under the condition that $\mci^*(G)=2^k$.
By Lemma \ref{mci^*(G)=1}, we may assume that $k\ge 1$, since generalised quarternion groups belong to $\FF_1$.
Let $A$ be a maximal abelian normal subgroup of $G$.
We split the proof of the theorem into two cases, according as all
subgroups of $A$ are normal in $G$, or not.

(i)
Assume first that every subgroup of $A$ is normal in $G$.
In particular, we have $\Omega_1(A)\le Z(G)$.
Let us write $A=\langle a_1 \rangle \times \cdots \times \langle a_r \rangle$,
where $o(a_1)\ge \cdots \ge o(a_r)$, and all factors are non-trivial.
We put $o(a_1)=2^n$, so that $\exp A=2^n$.
For simplicity, we write $U$ for the group of units $\UU(\Z/2^n\Z)$.

If $\langle x \rangle$ is a non-normal cyclic subgroup of $G$ then, by using that
$\mci^*(G)=2^k$, we get
\[
2^k \ge |C_G(x):\langle x \rangle|
\ge |\Omega_1(A)\langle x \rangle:\langle x \rangle|
= |\Omega_1(A):\Omega_1(A)\cap \langle x \rangle| \ge 2^{r-1},
\]
and consequently $r\le k+1$.
If we also have $n\le k+1$ then $|A|\le 2^{k^2+2k+1}$, and by Corollary 2 to Theorem 1.17 of \cite{suz} we get $|G|\le 2^{f_1(k)}$ for a polynomial $f_1(k)$ of degree $4$.
Thus we may assume that $n\ge k+2$ in the sequel.

Let $g$ be an arbitrary element of $G$.
If $s$ is an integer such that $a_1^g=a_1^s$, then we have $a^g=a^s$
for every $a\in A$, by Proposition \ref{when all normal}.
Note that this property implies that $C_G(a_1)=C_G(A)=A$.
Let $q$ be the order of $\overline s$ in $U$.
Since
\[
a_1^{g^j} = a_1^{s^j}
\quad
\text{for every $j\ge 1$,}
\]
it follows that $g^q$ is the first power of $g$ lying in $A$.
Consequently $|\langle g \rangle:\langle g \rangle \cap A|=q$.

Assume now that $g\in G\smallsetminus A$, so that $q>1$.
Since $q$ is a power of $2$, we can consider the element $g_1=g^{q/2}$.
Then $a_1^{g_1}=a_1^{s_1}$, where $s_1=s^{q/2}$ is such that
$o(\overline s_1)=2$ in $U$.
Since $n\ge k+2\ge 3$, there are three possibilities for $\overline{s_1}$:
it can be $\overline{1+2^{n-1}}$, $\overline{-1+2^{n-1}}$ or $\overline{-1}$.

We claim that $\overline{s_1}\ne \overline{1+2^{n-1}}$.
Assume otherwise, and put $J=\langle a_1,g_1 \rangle$.
Observe that $J$ is not a Dedekind group, since $\exp J\ge 8$.
Also, $J$ is not a group of type (R1) or (R2) in Blackburn's classification of $2$-groups with $R(G)\ne 1$, since a group of any of those types needs at least $3$ generators.
Finally, $J$ cannot be either of type (R3), i.e.\ a $Q$-group, since the centre of a
$Q$-group is elementary abelian (as happens with all non-abelian groups in the family
$\FF_1$), and $a_1^2\in Z(J)$ is of order at least $4$.
Consequently, $R(J)=1$ and, by Lemma \ref{cyclic avoiding R(G)}, there exists a cyclic non-normal subgroup $H$ of $J$ such that $\langle a_1 \rangle \cap H=1$.
Since $a_1^2\in Z(J)$, we have
\[
2^k \ge |C_G(H):H| \ge |\langle a_1^2 \rangle|.
\]
Thus $o(a_1)\le 2^{k+1}$ and $n\le k+1$, which is a contradiction.
Hence either $\overline{s_1}=\overline{-1}$ or $\overline{-1+2^{n-1}}$.
Now these two values are not squares in $U$, while $s_1=s^{q/2}$ and $q/2$
is a power of $2$.
This implies that $q=2$.
Consequently $g^2\in A$, $g=g_1$ and $s=s_1$.
Thus either $a^g=a^{-1}$ for every $a\in A$, or $a^g=a^{-1+2^{n-1}}$
for every $a\in A$.

According to this last property, the image of the embedding
$\varphi:G/A\rightarrow U$ of Proposition \ref{when all normal}
lies in the subgroup $V=\langle \overline{-1},\overline{-1+2^{n-1}} \rangle$.
If $\imm \varphi=V$ then there is an element $g\in G\smallsetminus A$
such that $a_1^g=a_1^{1+2^{n-1}}$, which is impossible as shown in the
last paragraph.
Hence $\imm \varphi$ is either $\langle \overline{-1} \rangle$ or
$\langle \overline{-1+2^{n-1}} \rangle$, and consequently $|G:A|=2$.
If we choose an element $b\in G\smallsetminus A$, then $b^2\in Z(G)=\Omega_1(A)$.
We conclude that $G$ lies in the family $\FF_1$.

(ii)
Assume now that there are subgroups of $A$ which are not normal in $G$.
According to Proposition \ref{when all normal}, there is a direct factor
of $A$ which is not normal in $G$.
So we can write
$A=\langle a_1 \rangle \times \cdots \times \langle a_r \rangle$,
where $\langle a_1 \rangle$ is not normal in $G$, and all factors are non-trivial.
(Unlike in case (i), now there is no relation between the orders of the
elements $a_i$.)
Since $A$ is normal in $G$, we necessarily have $r\ge 2$.

Let $A^*=\langle a_2,\ldots,a_r \rangle$.
Since $|C_G(a_1):\langle a_1 \rangle|\ge |A^*|$, it follows
that $|A^*|\le 2^k$.
In particular, we have $r\le k+1$, as in case (i).
Also, if $o(a_1)=2^n$ then we may assume that $n\ge 2k+2$, since otherwise
$|A|\le 2^{3k+1}$ and we get, as in case (i), that $|G|\le 2^{f_2(k)}$ for some polynomial function $f_2(k)$, of degree $2$ in this case.
In particular, $n$ is at least $4$.

If $H$ is a cyclic subgroup of $A$ of order at most $2^{n-k}$ then
\[
|C_G(H):H| \ge |A:H| \ge 2^{n+1}/2^{n-k} =2^{k+1},
\]
and consequently $H$ is normal in $G$.
It follows that the subgroups
\[
\langle a_1^{2^k} \rangle, \langle a_2 \rangle, \ldots, \langle a_r \rangle,
\langle a_1^{2^k}a_2 \rangle, \ldots, \langle a_1^{2^k}a_r \rangle
\]
are all normal in $G$.
By Proposition \ref{when all normal}, every subgroup of
$\langle a_1^{2^k},a_2,\ldots,a_r \rangle$ is normal in $G$.
In particular, $A^*$ is normal in $G$.

Let $C$ be defined by the condition $C/A^*=C_{G/A^*}(A/A^*)$.
If $c\in C$ then $[a_1,c]\in A^*$ has order at most $2^k$, and consequently
$[a_1^{2^k},c]=1$.
In other words, $\langle a_1^{2^k} \rangle$ is contained in $Z(C)$.
If $C$ is not a Dedekind group, then $|R(C)|\le 2$, and by
Lemma \ref{cyclic avoiding R(G)}, there exists a non-normal cyclic subgroup $H$ of $C$ such that $|H\cap \langle a_1 \rangle|\le 2$.
Since
\[
|C_G(H):H| \ge |H\langle a_1^{2^k} \rangle:H|
= |\langle a_1^{2^k} \rangle:H\cap \langle a_1^{2^k} \rangle|
\ge |\langle a_1^{2^k} \rangle|/2 = 2^{n-k-1},
\]
it follows that $n\le 2k+1$, contrary to our assumption above.
Hence $C$ is a Dedekind group.
We cannot have $C\cong Q_8\times E$, with $E$ elementary abelian, since
$\exp C\ge \exp A=2^n\ge 2^4$.
Thus we are only left with the case that $C$ is abelian.
Then $C=A$, since $A$ is a maximal abelian normal subgroup of $G$.
Consequently, we have $C_{G/A^*}(A/A^*)=A/A^*$, which means that $A/A^*$
is a maximal abelian normal subgroup of $G/A^*$.
Also, since $A/A^*=\langle a_1A^* \rangle$ is cyclic, every subgroup of
$A/A^*$ is normal in $G/A^*$.
If $G/A^*$ is abelian then $G/A^*$ belongs to the family $\FF_1$, and otherwise we are
in the situation of case (i).
It follows that either $|G/A^*|\le 2^{f_1(2k)}$ or $G/A^*$ lies in $\FF_1$.
(Take into account that $\mci^*(G/A^*)\le |A^*|\mci^*(G)\le 2^{2k}$ by
Lemma \ref{mci* in quotients}.)

In the former case, we have $|G|\le 2^{f_1(2k)+k}$.
In the latter, we get $|G:A|=2$, and if we choose $b\in G\smallsetminus A$ then
$a_1^b=a_1^sz$, with either $s=-1$ or $-1+2^{n-1}$, and $z\in A^*$ different from $1$.
It follows that
\[
(a_1^{2^k})^b = (a_1^{2^k})^s.
\]
Since all subgroups of $\langle a_1^{2^k},a_2,\ldots,a_r \rangle$ are
normal in $G$, and since $o(a_1^{2^k})\ge o(a_2),\ldots,o(a_r)$, it follows from
Proposition \ref{when all normal} that $a_i^b=a_i^s$ for every $i=2,\ldots,r$.
Thus $(a^*)^b=(a^*)^s$ for every $a^*\in A^*$.
Hence $Z(G)=\Omega_1(A)$ and, in particular, $b^2\in\Omega_1(A)$.
Now observe that
\[
a_1 = a_1^{b^2} = (a_1^s z)^b = (a_1^s z)^s z^b
= a_1^{s^2} z^sz^b = a_1z^{-1} z^b,
\]
since $o(z)\le 2^k\le 2^{n-1}$.
It follows that $z^b=z$, and so $z\in Z(G)$.
Thus $z\in\Omega_1(A^*)$.
We conclude that $G$ lies in the family $\FF_2$.

\vspace{8pt}

Now, by bringing together the results obtained in (i) and (ii), it follows that there is a polynomial $f(k)$ of degree $4$ such that either $|G|\le 2^{f(k)}$ or $G$ belongs to one of the families $\FF_1$ or $\FF_2$, as desired.
\end{proof}

\vspace{20pt}

\noindent
\textit{Acknowledgment\/}.
We thank R. Esteban-Romero for drawing our attention to the reference \cite{zha-guo}.

\end{document}